\documentclass[10pt]{article}
\usepackage{amsmath,amsthm,amssymb}
\usepackage{enumitem}

\usepackage[margin=2.3 cm,nohead]{geometry}
\usepackage[numbers,sort&compress]{natbib}
\usepackage{hyperref}
\usepackage{graphicx}
\usepackage{booktabs}    
\providecommand{\doi}[1]{\url{https://doi.org/#1}}

\usepackage{xcolor}
\newif\ifshowrev
\showrevfalse
\newcommand{\rev}[1]{\ifshowrev\textcolor{blue}{#1}\else#1\fi}
\newenvironment{revblock}{\ifshowrev\color{blue}\fi}{}

\setlist[enumerate]{label=(\roman*), align=left}

\newtheorem{theorem}{Theorem}
\newtheorem{lemma}[theorem]{Lemma}
\newtheorem{definition}{Definition}
\newtheorem{corollary}[theorem]{Corollary}
\newtheorem{proposition}[theorem]{Proposition}
\newtheorem{remark}{Remark}

\newtheorem{assumption}{Assumption}

\begin{document}
\title{Adaptive Metrics for Norm-Minimization-Based Outer Approximation in Convex Vector Optimization}

\author{
Mohammed Alshahrani \thanks{Department of Mathematics, King Fahd University of Petroleum \& Minerals, Dhahran, 31261, Saudi Arabia\\
Interdisciplinary Research Center for Smart Mobility and Logistics, King Fahd University of Petroleum \& Minerals, Dhahran, 31261, Saudi Arabia, (e-mail:{\tt mshahrani@kfupm.edu.sa}).}
}

\maketitle

\begin{abstract}
We develop an adaptive-metric framework for norm-minimization-based outer approximation algorithms in bounded convex vector optimization. The key idea is to let the scalarization metric vary across iterations while measuring approximation error in a fixed Euclidean norm. This enables the algorithm to exploit problem geometry dynamically. Our approach rests on two theoretical foundations. First, we prove that the improved Euclidean convergence rate $O(k^{2/(1-q)})$---previously known only for the standard $\ell_2$-norm---extends to all fixed inner-product norms. \rev{Second, our main result proves that, under the stated assumptions, the adaptive-metric algorithm attains the same rate $O(k^{2/(1-q)})$ without strict-convexity, smoothness, or dispersion hypotheses. The proof uses uniform conditioning from Tikhonov regularization to adapt the Euclidean packing argument to the moving metric. Conditioning affects only the multiplicative constant.} Numerical experiments on three test problems \rev{yield fitted slopes consistent with} the theoretical convergence rate\rev{ for both the fixed Euclidean and adaptive metrics, with comparable evaluation counts and no observed speed advantage on these tests}. Our results provide a rigorous foundation for adaptive metric selection in convex vector optimization.
\end{abstract}

\noindent
{\bf Keywords:} adaptive metrics, convex vector optimization, outer approximation, norm minimization, inner-product norms, convergence rate, Hausdorff distance, multiobjective optimization\\

\medskip

\noindent
{\bf AMS subject classification:} 52A27, 65K05, 90C29, 90C25.


\section{Introduction}

Vector optimization provides a unifying framework for multiobjective decision-making under partial orders induced by convex cones rather than simple componentwise dominance, for more details, see \cite{ansari_vector_2018}. In the convex setting, one is interested in approximating the \emph{upper image}
\[
P := \Gamma(X) + C,
\]
where $C\subset\mathbb{R}^q$ is a closed, solid, pointed cone, $X\subset\mathbb{R}^n$ is a compact convex feasible set, and $\Gamma:X\to\mathbb{R}^q$ is continuous and $C$--convex. Efficient solution methods for such convex vector optimization problems (CVOPs) are central in applications from finance and risk to control and machine learning; see, for instance, \cite{ararat_norm_2022,keskin_outer_2023} and the references therein.

Among objective-space methods, \emph{outer approximation} algorithms have emerged as particularly interesting. Originating from Benson's algorithm for linear multiobjective optimization and its generalizations, these methods iteratively refine a polyhedral outer approximation of $P$ by solving scalarization problems and adding supporting halfspaces, until the approximation is sufficiently accurate in a suitable metric. For CVOPs that are \emph{bounded}---in the sense that the upper image satisfies $P\subseteq\{y\}+C$ for some $y\in\mathbb{R}^q$---and have polyhedral ordering cones, several such algorithms have been developed based on Pascoletti--Serafini scalarizations, norm-minimizing scalarizations, and various vertex and direction selection rules; see, for instance, \cite{ararat_norm_2022,keskin_outer_2023}.

A recent line of work by Ararat et al. \cite{ararat_norm_2022,ararat_convergence_2024} introduced and analyzed an outer approximation algorithm driven by a \emph{norm-minimizing} scalarization. At each iteration, a reference point $v$ (typically a vertex of the current outer approximation) is chosen and a scalar convex program is solved that computes the distance from $v$ to a compact slice $A$ of the upper image with respect to a fixed norm (a slice-constrained variant of the norm-minimizing scalarization, following \cite[Algorithm~1]{ararat_convergence_2024}). The resulting primal--dual optimal pair yields, respectively, a new weak minimizer of the original CVOP and a supporting halfspace of that slice. This algorithm has two key theoretical properties: (i) it is the first CVOP algorithm for which \emph{finiteness} is proved under mild assumptions, and (ii) it admits a nontrivial \emph{convergence-rate analysis} based on the Hausdorff distance.

In \cite{ararat_convergence_2024}, the authors establish that, for an arbitrary norm on the objective space, the outer approximation $A_k$ converges to the compact slice $A:=P\cap S(\gamma)$ at the Hausdorff rate $O(k^{1/(1-q)})$ after $k$ iterations (equivalently, the cone extension $A_k+C$ approximates the full upper image $P$ at the same rate, by \cite[Cor.~6.10]{ararat_convergence_2024}), where $q$ is the dimension of the objective space and $q\ge 2$. This result is obtained by relating the sequence of approximating polytopes to an $H(r,A)$--sequence of cutting, see Definition \ref{def:H_sequence}, in the sense of Kamenev and Lotov et al.\ \cite{kamenev1992class,lotov2004interactive}, and then invoking general approximation results for convex bodies by polyhedra. In addition, \cite{ararat_convergence_2024} contains a refined analysis, in case of Euclidean-norm-based scalarizations, by exploiting the special geometry of the $\ell_2$-norm, the authors prove an improved convergence rate
\[
\delta_H(A_k,A) = O\bigl(k^{2/(1-q)}\bigr),
\]
measured against that same compact slice $A$ (Section~\ref{sec:preliminaries}), which is known to be the best possible in the classical multiobjective setting \cite{lotov2004interactive}. To the best of our knowledge, this is the first convergence-rate result of this sharp order for a general CVOP algorithm.

The improved convergence rate based on the Euclidean norm raises the following natural question. To what extent is this enhancement tied to the standard Euclidean structure, and to what extent does it reflect more general Hilbertian geometry? While the generic convergence rate of order $O(k^{1/(1-q)})$ in \cite{ararat_convergence_2024} holds for arbitrary norms, the proof of the rate of order $O(k^{2/(1-q)})$ uses several arguments that appear to be specific to the $\ell_2$-norm (e.g., Pythagorean identities and Euclidean packing estimates). It is not immediately clear whether similar arguments can be carried out for other norms, nor how the geometry of the norm (e.g., asphericity, uniform convexity) influences the achievable exponent.

The aim of this paper is to close this gap at least for the important class of \emph{inner-product norms} on $\mathbb{R}^q$. Any such norm can be written in the form
\[
\|y\|_M := \sqrt{y^\top M y},\qquad y\in\mathbb{R}^q,
\]
for some symmetric positive definite matrix $M$. The key observation is that the linear isomorphism $T:=M^{1/2}$ is an isometry between $(\mathbb{R}^q,\|\cdot\|_M)$ and the Euclidean space $(\mathbb{R}^q,\|\cdot\|_2)$, and that the norm-minimization--based algorithm of \cite{ararat_convergence_2024} is invariant under such transformations, provided one simultaneously transforms the ordering cone, the upper image and the compact slice in the objective space; we make this precise in Section~\ref{sec:inner_product_invariance} (see Proposition~\ref{prop:algorithm-equivalence}).


The approximation of Pareto frontiers and upper images in multiobjective optimization has a rich history spanning several decades. We briefly survey the main threads of this literature, focusing on outer approximation methods, scalarization techniques, convergence rate theory, and alternative approximation paradigms.

The systematic study of objective-space methods for multiobjective optimization began with Benson's seminal outer approximation algorithm for multiple objective linear programming~\cite{benson_outer_1998}. Benson's algorithm iteratively refines a polyhedral outer approximation of the efficient set by solving weighted-sum scalarizations and adding cutting hyperplanes, providing the first practical method capable of generating all efficient extreme points in the outcome set. The extension of Benson's approach to convex multiobjective problems was achieved by Ehrgott et al.~\cite{ehrgott_approximation_2011}, who proved weak $\varepsilon$-nondominance guarantees and gave a construction of separating hyperplanes in the nonsmooth setting. L\"ohne et al.~\cite{lohne_primal_2014} introduced primal and dual approximation algorithms that operate simultaneously on the upper and lower images. Their framework allows solid pointed polyhedral ordering cones and provides an $\varepsilon$-solution concept with rigorous approximation guarantees. The comprehensive treatment of vector optimization from a set-optimization perspective, including the theoretical foundations for infimum and supremum concepts, can be found in L\"ohne's monograph~\cite{lohne_vector_2011} and the references therein. Recent work by Keskin and Ulus~\cite{keskin_outer_2023} provides a systematic comparison of vertex selection rules and direction choices within the Pascoletti--Serafini framework, while Wagner et al.~\cite{wagner2023algorithms} extend outer approximation methods to unbounded problems. For general background on vector and multicriteria optimization, we refer to Jahn~\cite{jahn_vector_2011}. Moreover, extensions to nonconvex settings have been developed using branch-and-bound techniques~\cite{niebling2019branch,nobakhtian2017benson}.

The choice of scalarization has significant implications for both the theoretical properties and computational efficiency of outer approximation algorithms. The Pascoletti--Serafini scalarization, which finds the minimum distance from a reference point to the upper image along a prescribed direction, has been widely adopted due to its flexibility and geometric interpretability; see~\cite{keskin_outer_2023} for an extensive computational study comparing different parameter choices. However, this scalarization introduces direction-biasedness: the quality of the approximation can depend strongly on the choice of search directions. To address this limitation, Ararat et al.~\cite{ararat_norm_2022} introduced a norm-minimizing scalarization that computes the distance from a reference point to the upper image with respect to a fixed norm, without requiring a direction parameter. This approach yields an algorithm that is direction-unbiased in the sense of Klamroth et al.~\cite{klamroth2003unbiased}, who first identified direction-biasedness as a potential source of inefficiency in multiobjective approximation schemes. The norm-minimizing approach also enabled the first finiteness proof for a convex vector optimization algorithm under mild assumptions~\cite{ararat_norm_2022}. The interplay between scalarization choice and computational tractability in constrained settings is further explored in~\cite{klamroth2007constrained}.

The study of convergence rates for polyhedral approximation of convex bodies has deep roots in computational geometry and convex analysis. Kamenev~\cite{kamenev1992class,kamenev2002conjugate} developed a general theory of adaptive algorithms for approximating convex bodies by polyhedra, introducing the concept of $H(r,A)$-sequences---sequences of cutting halfspaces that remove at least a fraction $r$ of the current approximation error at each step. For such sequences, the Hausdorff approximation error decreases at the rate $O(k^{1/(1-q)})$. The improved rate $O(k^{2/(1-q)})$ for Euclidean approximation is a classical result in convex geometry, originating with Gruber and Kenderov~\cite{gruber_approximation_1982}, who proved that it is the best possible rate for smooth convex bodies. Lotov et al.~\cite{lotov2004interactive} provided a comprehensive treatment of these ideas in the context of interactive decision maps and Pareto frontier visualization; we refer to this monograph and the references therein for the extensive classical literature on polyhedral approximation of convex bodies. The improvement in the Euclidean case stems from packing estimates for spherical caps. In Euclidean geometry, the number of nearly disjoint caps of radius $\varepsilon$ on the unit sphere grows as $\varepsilon^{1-q}$. For general norms, the corresponding exponent may be worse, which limits the achievable approximation rate. Ararat et al.~\cite{ararat_convergence_2024} were the first to establish these convergence rates for a specific CVOP algorithm, proving that their norm-minimizing algorithm generates an $H(r,A)$-sequence and thereby inherits the corresponding rate guarantees; see also the references therein for connections to approximation theory and convex geometry. Their analysis leaves open the question of whether the improved Euclidean exponent can be achieved for other norms---a question we resolve affirmatively for inner-product norms in the present work.

While polyhedral outer approximation has been the dominant approach, alternative paradigms have emerged. Eichfelder and Warnow~\cite{eichfelder_approximation_2022} introduced a box-coverage method that approximates the nondominated set using a finite collection of boxes respecting the natural ordering, providing a simpler structure than polyhedra for subsequent analysis. The concept of local upper bounds, which plays a key role in branch-and-bound methods for nonconvex multiobjective optimization, has recently been extended to general polyhedral ordering cones by Eichfelder and Ulus~\cite{eichfelder_local_2026}. Mixed-integer extensions of convex multiobjective optimization have been developed by De~Santis et al.~\cite{desantis2020solving}, combining outer approximation with integer programming techniques.

The present paper extends the convergence rate theory of~\cite{ararat_convergence_2024} in two directions. First, we show that the improved convergence rate based on the Euclidean norm is not special to the $\ell_2$-norm but extends to all inner-product norms via a linear isometry argument. Second, we introduce and analyze adaptive inner-product metrics that vary across iterations. \rev{Under the assumptions of Theorem~\ref{thm:rate}, the adaptive algorithm attains the same exponent $2/(1-q)$ as fixed inner-product norms (Theorem~\ref{thm:inner-product-rate}), without strict-convexity, smoothness, or dispersion hypotheses. This is the first rigorous convergence-rate guarantee for adaptive metric selection in norm-minimization-based vector optimization.}


The contributions of this paper are fivefold.
\begin{enumerate}
    \item (Section~\ref{sec:inner_product_invariance}, Theorem~\ref{thm:inner-product-rate}) We show that the improved Hausdorff convergence rate $O(k^{2/(1-q)})$, previously established only for Euclidean norm-based scalarizations, extends to all inner-product norms on the objective space. By exploiting the linear isometric structure induced by positive definite matrices, we prove that the norm-minimization--based outer approximation algorithm is invariant under appropriate coordinate transformations, allowing the Euclidean convergence analysis to be transferred verbatim to arbitrary inner-product geometries.

    \item (Section~\ref{sec:inner_product_invariance}, Theorem~\ref{thm:inner-product-rate}) We derive explicit convergence bounds in the original coordinates, in which the constants depend on the spectral properties---the extreme eigenvalues $\lambda_{\min}(M),\lambda_{\max}(M)$, equivalently the condition number $\kappa(M)$---of the underlying inner product and on geometric characteristics of the transformed slice $\tilde A=TA$ (through its circumradius). This clarifies the precise role played by the choice of norm in the convergence behavior of the algorithm.

    \item (Section~\ref{sec:convergence}, Theorems~\ref{thm:baseline_correctness_slice} and~\ref{thm:euclid_convergence_adaptive}) We analyze an adaptive-metric variant in which the inner product used in the scalarization varies across iterations while the approximation error is evaluated in a fixed Euclidean norm. Under uniform spectral bounds on the adaptive metrics, we establish correctness and convergence to the slice in the fixed Euclidean norm, together with computable metric-quality and selection-proxy bounds relating the adaptive scalarization to the Euclidean error (the convergence-rate constant is quantified by the rate theorem, Theorem~\ref{thm:rate}).

    \item \rev{(Section~\ref{sec:convergence}, Theorem~\ref{thm:rate}) Our main result proves that, under the theorem's assumptions, the adaptive-metric algorithm attains the improved Hausdorff rate $O(k^{2/(1-q)})$, with the same sharp exponent as fixed inner-product norms and without strict-convexity, smoothness, or dispersion hypotheses on the image set. The proof uses uniform conditioning from the $\varepsilon_0$-regularization to adapt the packing argument of~\cite{ararat_convergence_2024} to the moving metric. Conditioning affects only the multiplicative constant. Whether this exponent persists for an unregularized moving metric with unbounded condition number remains open.}

    \item (Section~\ref{sec:experiments}) We present numerical experiments on three test problems from the literature. The experiments \rev{are assessed against} the theoretical rate $O(k^{2/(1-q)})$ for \rev{the fixed Euclidean and adaptive metrics (Theorems~\ref{thm:inner-product-rate} and~\ref{thm:rate}), with slopes consistent with the predicted exponents, comparable evaluation counts, and no observed speed advantage on these tests}. \rev{In the four main $q\le 3$ runs, the cut-normal second-moment matrix satisfies $\lambda_{\min}(\Sigma_k)>0$ at termination; no uniform-in-$k$ floor is claimed (Remark~\ref{rem:dispersion}).} For a high-dimensional case ($q = 4$), we compare vertex-finding strategies as heuristics, without establishing a convergence guarantee.
\end{enumerate}

Together, these results provide a rigorous foundation for the use of geometry-aware and adaptive inner-product metrics in norm-minimization--based convex vector optimization.
    
The remainder of the paper is organized as follows. Section~\ref{sec:preliminaries} provides the notation and problem setting. In Section~\ref{sec:inner_product_invariance}, we extend the improved convergence rate from the Euclidean norm to all inner-product norms. The adaptive-metric framework, in which the scalarization norm varies across iterations, is introduced in Section~\ref{sec:adaptive_framework}. Section~\ref{sec:convergence} develops the convergence analysis of this framework and \rev{establishes its improved convergence rate}. Numerical experiments on three test problems are reported in Section~\ref{sec:experiments}. Finally, Section~\ref{sec:conclusion} concludes with open questions.


\section{Preliminaries and Problem Setting}\label{sec:preliminaries}

Let $q\in\mathbb{N}$ and let $\|\cdot\|$ be any norm on $\mathbb{R}^q$ with dual norm $\|\cdot\|_*$. For a set $A\subset\mathbb{R}^q$, we denote by $\operatorname{int} A$, $\operatorname{cl} A$, $\operatorname{bd} A$, $\operatorname{conv} A$, $\operatorname{cone} A$ and $\operatorname{ext} A$ its interior, closure, boundary, convex hull, conic hull, and set of extreme points (vertices), respectively. We further write $\mathbb{S}^{q-1} := \{u\in\mathbb{R}^q : \|u\|_2 = 1\}$ for the Euclidean unit sphere, and $I_q$ (or simply $I$, when the dimension is clear) for the $q\times q$ identity matrix. Given nonempty sets $A,B\subset\mathbb{R}^q$, the \emph{Hausdorff distance} of $A$ and $B$ with respect to $\|\cdot\|$, denoted $\delta_H(A,B;\|\cdot\|)$, is
\[
\delta_H(A,B;\|\cdot\|) := \max\left\{ \sup_{y\in A} d(y,B),\ \sup_{z\in B} d(z,A) \right\},
\quad
d(y,B) := \inf_{z\in B} \|y-z\|.
\]
When the underlying norm is clear from the context we write simply $\delta_H(A,B)$; in particular, $\delta_H(\cdot,\cdot;\|\cdot\|_2)$ and $\delta_H(\cdot,\cdot;\|\cdot\|_M)$ denote the Hausdorff distance with respect to the Euclidean norm $\|\cdot\|_2$ and an inner-product norm $\|\cdot\|_M$ (both introduced below), respectively.

For a nonempty convex set $A\subset\mathbb{R}^q$ and $w\in\mathbb{R}^q\setminus\{0\}$, the \emph{support function} of $A$ in direction $w$ is
\[
h_A(w) := \inf_{y\in A} w^\top y.
\]
The \emph{supporting halfspace} of $A$ with outer normal $w$ is
\[
\mathcal{H}(w,A) := \{y\in\mathbb{R}^q : w^\top y \ge h_A(w)\}.
\]
If $\bar y\in A$ attains the infimum, i.e., $w^\top \bar y = h_A(w)$, then $\mathcal{H}(w,A)$ is called a \emph{supporting halfspace of $A$ at $\bar y$}, and $\operatorname{bd}\mathcal{H}(w,A) = \{y : w^\top y = w^\top \bar y\}$ is the corresponding \emph{supporting hyperplane}. Let $C\subset \mathbb{R}^q$ be a closed convex cone. Its dual cone is
\[
C^+ := \{ w\in\mathbb{R}^q \mid w^\top y \ge 0\ \forall\,y\in C\}.
\]
For a nonempty closed convex set $A\subset\mathbb{R}^q$, the \emph{recession cone} of $A$ is
\[
\mathrm{rec}(A) := \{ d\in\mathbb{R}^q \mid y + td \in A\ \forall\,y\in A,\ t\ge 0\}.
\]
For the ordering cone $C$, the \emph{efficient boundary} (or \emph{efficient frontier}) of a set $A\subset\mathbb{R}^q$ is
\[
\partial_{\mathrm{eff}} A := \{ y\in A \mid (\{y\}-\operatorname{int} C)\cap A = \emptyset \},
\]
i.e., the set of weakly $C$-minimal points of $A$ (equivalently, the part of $\operatorname{bd} A$ not dominated by other points of $A$).
For symmetric matrices $A, B \in \mathbb{R}^{q \times q}$, we write $A \preceq B$ (equivalently, $B \succeq A$) to denote the \emph{Loewner order}.\footnote{The \emph{Loewner order} on the space of symmetric $q\times q$ matrices is defined by $A\preceq B \iff B-A$ is positive semidefinite (equivalently, all eigenvalues of $B-A$ are nonnegative).} We write $A \prec B$ (equivalently, $B \succ A$) if $B - A$ is positive definite. In particular, $M \succ 0$ means $M$ is positive definite.

\begin{assumption}[Standing hypotheses]\label{ass:standing}
Throughout this paper, we assume:
\begin{enumerate}
\item[(a)] $C\subset\mathbb{R}^q$ is a closed, solid, pointed, nontrivial, polyhedral cone.
\item[(b)] $X\subset\mathbb{R}^n$ is a nonempty compact convex set with $\operatorname{int} X\neq\emptyset$.
\item[(c)] $\Gamma:X\to\mathbb{R}^q$ is continuous and $C$--convex, i.e.,
\[
\Gamma(\lambda x_1 + (1-\lambda)x_2) \le_C \lambda \Gamma(x_1) + (1-\lambda)\Gamma(x_2)
\quad
\forall\,x_1,x_2\in X,\ \lambda\in[0,1],
\]
where the partial order induced by $C$ is given by $y \le_C z \iff z-y \in C$.
\end{enumerate}
\end{assumption}

We consider the convex vector optimization problem
\begin{equation}\label{P}
\tag{P}
\min \Gamma(x) \ \text{w.r.t. }\le_C \quad\text{s.t. } x\in X.
\end{equation}
The \emph{upper image} of \eqref{P} is
\[
P := \Gamma(X) + C.
\]
Under Assumption~\ref{ass:standing}, $P$ is a closed convex set with $P = P+C$, and \eqref{P} is bounded in the sense that $P\subset \{y\} + C$ for some $y\in\mathbb{R}^q$. To facilitate the subsequent outer approximation procedure, we introduce a compact slice of the upper image by intersecting it with a suitably chosen halfspace $S(\gamma)$ (see~\cite[Section~4]{ararat_convergence_2024} for details). To this end, fix $\bar w\in\operatorname{int} C^+$ and $\gamma\in\mathbb{R}$ such that
\[
\Gamma(X) \subset \operatorname{int} S(\gamma),
\qquad
S(\gamma) := \{ y\in\mathbb{R}^q \mid \bar w^\top y \le \gamma\}.
\]
Then $A := P \cap S(\gamma)$ is a nonempty convex compact set and $P = A + C$ (see~\cite[Remark~4.4]{ararat_convergence_2024}). More precisely, define
\[
\beta := \sup_{x\in X} \bar w^\top \Gamma(x),
\]
which is finite by compactness. Fix $\alpha > 0$ and set $\gamma := \beta + \alpha$, with $\alpha$ large enough that $\Gamma(X)\subset\operatorname{int} S(\gamma)$; intuitively, the cap at level $\gamma$ is placed strictly above $\Gamma(X)$, so that the slice $A$ retains the entire efficient boundary of $P$ (see~\cite[Section~6]{ararat_norm_2022}). Now, let $M\in\mathbb{R}^{q\times q}$ be symmetric positive definite, with smallest and largest eigenvalues $\lambda_{\min}(M)$ and $\lambda_{\max}(M)$ and (spectral) condition number $\kappa(M) := \lambda_{\max}(M)/\lambda_{\min}(M)$. We define the norm
\[
\|y\|_M := \sqrt{y^\top M y}, \qquad y\in\mathbb{R}^q,
\]
with dual norm $\|u\|_{M^{-1}} := \sqrt{u^\top M^{-1} u}$. Let $T := M^{1/2} \in\mathbb{R}^{q\times q}$ be the unique symmetric positive definite square root of $M$. Then $T$ is invertible, and
\[
\|y\|_M = \|Ty\|_2,
\qquad
\|u\|_{M^{-1}} = \|T^{-1} u\|_2
\]
for all $y,u\in\mathbb{R}^q$, where $\|\cdot\|_2$ denotes the Euclidean norm.

\begin{lemma}[Hausdorff distance isometry]\label{lem:hausdorff-isometry}
Let $A,B\subset\mathbb{R}^q$ be nonempty. Then
\[
\delta_H(A,B;\|\cdot\|_M) = \delta_H(TA,TB;\|\cdot\|_2),
\]
where $\delta_H(\cdot,\cdot;\|\cdot\|_M)$ denotes the Hausdorff distance with respect to $\|\cdot\|_M$.
\end{lemma}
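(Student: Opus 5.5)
The plan is a direct unwinding of definitions, using only the identity $\|y\|_M=\|Ty\|_2$ and the fact that $T=M^{1/2}$ is an invertible linear map. The key step is a pointwise statement about distances to a set. For any $y\in\mathbb{R}^q$ and nonempty $B\subset\mathbb{R}^q$ I will show
\[
d_M(y,B)=\inf_{z\in B}\|y-z\|_M=\inf_{z\in B}\|T(y-z)\|_2=\inf_{z\in B}\|Ty-Tz\|_2=\inf_{z'\in TB}\|Ty-z'\|_2=d_2(Ty,TB).
\]
The second equality is the norm identity. The third is linearity of $T$. The fourth is a reindexing of the infimum through the bijection $z\mapsto Tz$ from $B$ onto $TB$: this map is surjective by the definition of $TB$, so the two index sets give the same set of values and hence the same infimum.

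Next I take suprema. Since $y\mapsto Ty$ maps $A$ onto $TA$, I get
\[
\sup_{y\in A}d_M(y,B)=\sup_{y\in A}d_2(Ty,TB)=\sup_{y'\in TA}d_2(y',TB),
\]
again by reindexing over the image set. By symmetry, the same argument with the roles of $A$ and $B$ exchanged gives
\[
\sup_{z\in B}d_M(z,A)=\sup_{z'\in TB}d_2(z',TA).
\]
Taking the maximum of the two one-sided quantities yields $\delta_H(A,B;\|\cdot\|_M)=\delta_H(TA,TB;\|\cdot\|_2)$.

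No compactness or convexity of $A$ and $B$ is needed, because the identities hold in the extended reals. If $A$ or $B$ is unbounded, both sides may equal $+\infty$ simultaneously, and the equalities still hold.

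There is no genuine obstacle here. The only point requiring care is the reindexing of the infima and suprema over image sets. For this it suffices that $T$ maps $B$ onto $TB$, which holds by definition; injectivity of $T$ is not even needed for this step.
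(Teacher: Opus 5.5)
Your proposal is correct and follows the same approach as the paper, which simply cites the identity $\|y\|_M=\|Ty\|_2$ and the change of variables $\tilde y=Ty$, $\tilde z=Tz$ in the sup--inf definition of the Hausdorff distance. You spell out the reindexing of the infima and suprema over the image sets, and also the extended-real case for unbounded sets, which the paper leaves implicit.
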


\begin{proof}
The claim follows immediately from the isometry $\|y\|_M = \|Ty\|_2$ and the change of variables $\tilde y = Ty$, $\tilde z = Tz$ in the sup-inf definition of Hausdorff distance.
\end{proof}

\subsection{Transformed Problem}

The isometry $T=M^{1/2}$ introduced above maps the geometry of $\|\cdot\|_M$ to the Euclidean one; Lemma~\ref{lem:hausdorff-isometry} records this at the level of Hausdorff distance. In this subsection we carry the same change of variables through the data of the problem---the objective map, the ordering cone, the slice, and the norm-minimizing scalarization---so that working with $\|\cdot\|_M$ in the original coordinates is equivalent to working with the Euclidean norm in the transformed ones. This equivalence is what lets the convergence analysis of Section~\ref{sec:inner_product_invariance} reduce to the Euclidean case.

Define the transformed objective mapping $\tilde\Gamma : X\to\mathbb{R}^q$ by
\[
\tilde\Gamma(x) := T\Gamma(x),
\]
and the transformed cone $\tilde C := T C$. It is easy to see that $\tilde C$ is a closed, pointed, solid, polyhedral cone. The transformed vector optimization problem is
\begin{equation}\label{Ptilde}
\tag{$\mathrm{P}_T$}
\min \tilde\Gamma(x) \ \text{w.r.t. }\le_{\tilde C} \quad\text{s.t. } x\in X.
\end{equation}
Its upper image is
\[
\tilde P := \tilde\Gamma(X) + \tilde C = T(\Gamma(X) + C) = T P.
\]

Let $\tilde w := T^{-1}\bar w\in\mathbb{R}^q$ and define the transformed halfspace
\[
\tilde S(\tilde\gamma) := \{ y\in\mathbb{R}^q \mid \tilde w^\top y \le \tilde\gamma\},
\qquad
\tilde\gamma := \gamma.
\]
Then $T(S(\gamma)) = \tilde S(\tilde\gamma)$ and
\[
\tilde A := \tilde P \cap \tilde S(\tilde\gamma) = T(P\cap S(\gamma)) = T(A).
\]
In particular, $\tilde A$ is a nonempty convex compact set and $\tilde P = \tilde A + \tilde C$. For a fixed $v\in\mathbb{R}^q$, define $\tilde v := Tv$ and consider the primal norm-minimizing scalarization in the original coordinates based on $\|\cdot\|_M$:
\begin{equation}\label{P-v}
\tag{$P_M(v)$}
\begin{aligned}
\min_{x\in X,\ z\in\mathbb{R}^q}\ & \|z\|_M\\
\text{s.t. }& \Gamma(x) - z - v \le_C 0,\\
& \bar w^\top(v+z) \le \gamma.
\end{aligned}
\end{equation}
Its Lagrange dual can be written as
\begin{equation}\label{D-v}
\tag{$D_M(v)$}
\begin{aligned}
\max_{w\in C^+,\ \lambda\ge 0}\ & \inf_{x\in X,z\in\mathbb{R}^q}
\Bigl( \|z\|_M + w^\top(\Gamma(x)-z-v) + \lambda(\bar w^\top(v+z)-\gamma)\Bigr).
\end{aligned}
\end{equation}
\medskip
The following proposition derives the reduced form of the dual and establishes that optimal solutions of the primal--dual pair generate supporting halfspaces for $P\cap S(\gamma)$. We work first in the original coordinates (Proposition~\ref{prop:dual_support}); Propositions~\ref{prop:scalarization-transform} and~\ref{prop:algorithm-equivalence} then connect this formulation to the transformed, Euclidean one.

\begin{proposition}[Dual form and supporting halfspace]\label{prop:dual_support}
Fix $v\in\mathbb{R}^q$ and $M\succ0$. Define the Lagrangian
\[
L(x,z;w,\lambda)
:= \|z\|_{M} + w^\top(\Gamma(x)-z-v) + \lambda\big(\bar w^\top(v+z)-\gamma\big),
\]
with multipliers $w\in C^+$ and $\lambda\ge 0$, and define the dual objective
\[
\phi_M(w,\lambda)
:= \inf_{x\in X,\ z\in\mathbb{R}^q} L(x,z;w,\lambda).
\]
Then:
\begin{enumerate}
\item[(i)] (Reduction of the dual objective) the dual \eqref{D-v} reduces to the explicit form
\begin{equation}\label{eq:Dual_M}
\max_{w\in C^+,\ \lambda\ge 0}\
\Big\{\inf_{x\in X} w^\top \Gamma(x) - w^\top v + \lambda(\bar w^\top v-\gamma)\Big\}
\quad\text{s.t.}\quad \|w-\lambda\bar w\|_{M^{-1}}\le 1.
\end{equation}

\item[(ii)] (Existence of a supporting halfspace for $P\cap S(\gamma)$)
Assume $(x^\star,z^\star)$ is an optimal solution of \eqref{P-v} and $(w^\star,\lambda^\star)$ is an optimal solution of \eqref{eq:Dual_M} such that strong duality holds, and set $y^\star := v+z^\star$. Then $y^\star\in P\cap S(\gamma)$, and the halfspace $H^\star := \{y\in\mathbb{R}^q:\ w^{\star\top} y \ge w^{\star\top} y^\star\}$ contains $P\cap S(\gamma)$ and supports it at $y^\star$ (when $w^\star\neq 0$); that is, $w^{\star\top} y \ge w^{\star\top} y^\star$ for all $y\in P\cap S(\gamma)$.
\end{enumerate}
\end{proposition}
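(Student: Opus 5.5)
For part (i), I will separate the Lagrangian into an $x$-part and a $z$-part and take the two infima separately. Rearranging gives
\[
L(x,z;w,\lambda)=\bigl[w^\top\Gamma(x)\bigr]+\bigl[\|z\|_M-(w-\lambda\bar w)^\top z\bigr]-w^\top v+\lambda(\bar w^\top v-\gamma).
\]
The infimum over $x\in X$ of the first bracket is $\inf_{x\in X}w^\top\Gamma(x)$. This is finite because $\Gamma$ is continuous and $X$ is compact (Assumption~\ref{ass:standing}).

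For the second bracket, set $u:=w-\lambda\bar w$. I will use the standard conjugate identity
\[
\inf_z\{\|z\|_M-u^\top z\}=\begin{cases}0 & \text{if } \|u\|_{M^{-1}}\le1,\\ -\infty & \text{otherwise.}\end{cases}
\]
Here $\|\cdot\|_{M^{-1}}$ is the dual norm of $\|\cdot\|_M$. To check it, the generalized Cauchy--Schwarz inequality $u^\top z=(T^{-1}u)^\top(Tz)\le\|u\|_{M^{-1}}\|z\|_M$ gives nonnegativity of $\|z\|_M-u^\top z$ when $\|u\|_{M^{-1}}\le1$, and $z=0$ attains the value $0$. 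If $\|u\|_{M^{-1}}>1$, take $z=tM^{-1}u$ with $t\to\infty$. Then
\[
\|z\|_M-u^\top z=t\bigl(\|u\|_{M^{-1}}-\|u\|_{M^{-1}}^2\bigr)\to-\infty .
\]
Maximizing the resulting dual function over $w\in C^+$, $\lambda\ge0$ only picks up the constraint $\|w-\lambda\bar w\|_{M^{-1}}\le1$, because dual points outside it have value $-\infty$. This gives \eqref{eq:Dual_M}.

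For part (ii), I will first show $y^\star\in P\cap S(\gamma)$. The primal constraints give $v+z^\star-\Gamma(x^\star)\in C$, so $y^\star\in\Gamma(X)+C=P$, and they give $\bar w^\top y^\star\le\gamma$, so $y^\star\in S(\gamma)$.

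Next I will use strong duality, i.e. $\|z^\star\|_M=\phi_M(w^\star,\lambda^\star)$. Chaining this with the definition of $\phi_M$ as an infimum and with primal feasibility yields
\[
\|z^\star\|_M=\phi_M(w^\star,\lambda^\star)\le L(x^\star,z^\star;w^\star,\lambda^\star)\le\|z^\star\|_M .
\]
The last inequality holds because $w^{\star\top}(\Gamma(x^\star)-y^\star)\le0$ (as $w^\star\in C^+$ and $y^\star-\Gamma(x^\star)\in C$), and because $\lambda^\star(\bar w^\top y^\star-\gamma)\le0$. So equality holds throughout. This gives complementary slackness,
\[
w^{\star\top}\Gamma(x^\star)=w^{\star\top}y^\star,\qquad \lambda^\star(\bar w^\top y^\star-\gamma)=0,
\]
and it shows $(x^\star,z^\star)$ minimizes $L(\cdot,\cdot;w^\star,\lambda^\star)$. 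In particular $x^\star$ minimizes $w^{\star\top}\Gamma$ over $X$ and $z^\star$ minimizes $\|z\|_M-u^{\star\top}z$, where $u^\star:=w^\star-\lambda^\star\bar w$. The second fact gives $u^{\star\top}z^\star=\|z^\star\|_M$.

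Now fix any $y\in P\cap S(\gamma)$ and write $y=\Gamma(x)+c$ with $x\in X$, $c\in C$. I will bound $w^{\star\top}y$ from below in two ways:
\[
w^{\star\top}y\ge w^{\star\top}\Gamma(x)\ge w^{\star\top}\Gamma(x^\star)=w^{\star\top}y^\star .
\]
The first inequality uses $w^\star\in C^+$, the second uses the minimizing property of $x^\star$, and the equality is complementary slackness. This proves that $H^\star$ contains $P\cap S(\gamma)$. It supports the set at $y^\star$ because $y^\star$ lies in the set and on the boundary hyperplane; the case $w^\star\ne0$ is needed only so that $H^\star$ is a genuine halfspace.

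It is notable that $\lambda^\star$ plays no role in this inequality. The constraint $\bar w^\top y\le\gamma$ is not even needed, so the argument in fact shows $H^\star\supseteq P$.

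The main obstacle is bookkeeping in the strong-duality sandwich: the signs must be handled so that each inequality goes the right way. The rest is routine.
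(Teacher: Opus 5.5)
Your proof is correct and follows the paper's route: the same splitting of the Lagrangian and the same norm-conjugate identity in (i), and in (ii) the same chain $w^{\star\top}y\ge w^{\star\top}\Gamma(x)\ge w^{\star\top}\Gamma(x^\star)=w^{\star\top}y^\star$. The only difference is that you derive the minimizing property of $x^\star$ and complementary slackness from the sandwich $\|z^\star\|_M=\phi_M(w^\star,\lambda^\star)\le L(x^\star,z^\star;w^\star,\lambda^\star)\le\|z^\star\|_M$, and you verify the conjugate identity, whereas the paper simply invokes saddle-point optimality; your side remark that in fact $H^\star\supseteq P$ is also correct.
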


\begin{proof}
(i) Write the Lagrangian as
\[
L(x,z;w,\lambda)
=
\underbrace{w^\top \Gamma(x)}_{\text{$x$-part}}
\;+\;
\underbrace{\big(\|z\|_{M} + (\lambda\bar w-w)^\top z\big)}_{\text{$z$-part}}
\;+\;
\underbrace{\big(-w^\top v + \lambda(\bar w^\top v-\gamma)\big)}_{\text{constants}}.
\]
Taking the infimum over $x\in X$ yields $\inf_{x\in X} w^\top \Gamma(x)$.
For the $z$-part, recall the Fenchel conjugate of the norm: the conjugate of $z\mapsto \|z\|_M$ is the indicator function of the dual unit ball $\{a:\ \|a\|_{M^{-1}}\le 1\}$. Equivalently,
\[
\inf_{z\in\mathbb{R}^q}\big(\|z\|_{M} + a^\top z\big)
=
\begin{cases}
0, & \text{if }\ \|a\|_{M^{-1}}\le 1,\\
-\infty, & \text{otherwise.}
\end{cases}
\]
Applying this with $a=\lambda\bar w-w$ gives, when $\|w-\lambda\bar w\|_{M^{-1}}\le 1$, $\phi_M(w,\lambda)=\inf_{x\in X} w^\top \Gamma(x) - w^\top v + \lambda(\bar w^\top v-\gamma)$ (and $\phi_M(w,\lambda)=-\infty$ otherwise), whence \eqref{eq:Dual_M}, where $\|a\|_{M^{-1}}:=\sqrt{a^\top M^{-1}a}$.

(ii) Feasibility of $(x^\star,z^\star)$ for \eqref{P-v} implies $\Gamma(x^\star)-z^\star-v\in -C$, i.e., $v+z^\star=\Gamma(x^\star)+c^\star$ for some $c^\star\in C$, hence $y^\star=v+z^\star\in P$.
The constraint $\bar w^\top(v+z^\star)\le \gamma$ gives $y^\star\in S(\gamma)$, thus $y^\star\in P\cap S(\gamma)$.

Let $y\in P\cap S(\gamma)$ be arbitrary; write $y=\Gamma(x)+c$ with $x\in X$ and $c\in C$.
Since $w^\star\in C^+$ and $c\in C$, we have $w^{\star\top}c\ge 0$, hence
\[
w^{\star\top}y \;\ge\; w^{\star\top}\Gamma(x)\;\ge\;\inf_{x'\in X} w^{\star\top}\Gamma(x').
\]

\begin{revblock}
By the saddle-point optimality of the primal--dual pair---the assumed optimality of $(x^\star,z^\star)$ and $(w^\star,\lambda^\star)$ together with strong duality makes $(x^\star,z^\star;w^\star,\lambda^\star)$ a saddle point of the Lagrangian---$x^\star$ minimizes the $x$-part $w^{\star\top}\Gamma(\cdot)$ of the Lagrangian over $X$, whence $\inf_{x'\in X} w^{\star\top}\Gamma(x')=w^{\star\top}\Gamma(x^\star)$. Complementary slackness for the cone constraint gives $(w^\star)^\top c^\star=0$, where $c^\star\in C$ is the slack in $y^\star=v+z^\star=\Gamma(x^\star)+c^\star$; hence
\[
w^{\star\top}y^\star=w^{\star\top}\Gamma(x^\star)=\inf_{x'\in X} w^{\star\top}\Gamma(x').
\]
Combining this with the display above yields $w^{\star\top}y\ge w^{\star\top}y^\star$ for all $y\in P\cap S(\gamma)$, which proves~(ii).
\end{revblock}
\end{proof}

We now return to the transformed problem. The following proposition shows that, under the isometry $T$, the original-coordinate scalarization \eqref{P-v} is exactly a Euclidean norm-minimizing scalarization on the transformed data, with its dual transforming correspondingly.

\begin{proposition}[Scalarization transform]\label{prop:scalarization-transform}
Let $\tilde v := Tv$ and $\tilde z := Tz$. Then \eqref{P-v} is equivalent to the Euclidean norm--based scalarization
\begin{equation}\label{P2-v}
\tag{$\tilde P_2(\tilde v)$}
\begin{aligned}
\min_{x\in X,\ \tilde z\in\mathbb{R}^q}\ & \|\tilde z\|_2\\
\text{s.t. }& \tilde\Gamma(x) - \tilde z - \tilde v \le_{\tilde C} 0,\\
& \tilde w^\top(\tilde v+\tilde z)\le \tilde\gamma.
\end{aligned}
\end{equation}
Moreover, the dual problem \eqref{D-v} is equivalent to the Euclidean dual of \eqref{P2-v}.
\end{proposition}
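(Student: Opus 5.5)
The plan is to show that the invertible linear change of variables $(x,z)\mapsto(x,\tilde z)=(x,Tz)$ maps the feasible set of \eqref{P-v} bijectively onto the feasible set of \eqref{P2-v} and preserves objective values. The duals are then related by the companion substitution $\tilde w_{\rm dual}=T^{-1}w$ on the multipliers.

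First I will check each piece of the primal separately.

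\emph{Objective.} We have $\|z\|_M=\|Tz\|_2=\|\tilde z\|_2$ by the identity recorded before Lemma~\ref{lem:hausdorff-isometry}.

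\emph{Cone constraint.} Since $T$ is a linear bijection, $a\in -C$ if and only if $Ta\in -TC=-\tilde C$. Applying this with $a=\Gamma(x)-z-v$ gives $\Gamma(x)-z-v\le_C0$ if and only if $\tilde\Gamma(x)-\tilde z-\tilde v\le_{\tilde C}0$.

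\emph{Slice constraint.} Because $T$ is symmetric, $\tilde w^\top(\tilde v+\tilde z)=(T^{-1}\bar w)^\top T(v+z)=\bar w^\top T^{-1}T(v+z)=\bar w^\top(v+z)$, and $\tilde\gamma=\gamma$.

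Hence $(x,z)$ is feasible (respectively optimal) for \eqref{P-v} exactly when $(x,Tz)$ is feasible (respectively optimal) for \eqref{P2-v}, and the optimal values coincide.

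For the dual, I will first record the dual cone of $\tilde C$: $\tilde C^+=T^{-1}C^+$. Indeed, $u^\top Tc\ge0$ for all $c\in C$ holds iff $Tu\in C^+$, using $T^\top=T$.

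Next I write the Euclidean Lagrangian of \eqref{P2-v} with multipliers $u\in\tilde C^+$ and $\lambda\ge0$. Substituting $u=T^{-1}w$ with $w\in C^+$ gives the termwise identities
\[
u^\top(\tilde\Gamma(x)-\tilde z-\tilde v)=w^\top(\Gamma(x)-z-v),
\qquad
\lambda(\tilde w^\top(\tilde v+\tilde z)-\tilde\gamma)=\lambda(\bar w^\top(v+z)-\gamma).
\]
Together with $\|\tilde z\|_2=\|z\|_M$, this shows the two Lagrangians agree under $(x,z,w,\lambda)\leftrightarrow(x,Tz,T^{-1}w,\lambda)$. Since $z\mapsto Tz$ is a bijection of $\mathbb{R}^q$, the infima over $(x,z)$ and over $(x,\tilde z)$ coincide, so the dual objectives agree and the duals are equivalent.

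As a consistency check, I will also compare with the reduced form from Proposition~\ref{prop:dual_support}(i). The Euclidean reduction gives the constraint $\|u-\lambda\tilde w\|_2\le1$, and $\|T^{-1}(w-\lambda\bar w)\|_2=\|w-\lambda\bar w\|_{M^{-1}}$, which matches \eqref{eq:Dual_M}.

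There is no deep obstacle here. The only step needing care is choosing the correct transformation of the multipliers, $T^{-1}$ rather than $T$, so that the dual cones correspond. This relies on the symmetry of $T$, and I will verify it explicitly.
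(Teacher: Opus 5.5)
Your proposal is correct and follows the paper's proof essentially step by step: the same bijection $(x,z)\mapsto(x,Tz)$ with the three constraint and objective checks, the same dual-cone identity $\tilde C^+=T^{-1}C^+$ obtained from the symmetry of $T$, and the same multiplier substitution $w\mapsto T^{-1}w$ showing that the Lagrangians coincide term by term. Your extra consistency check, $\|T^{-1}(w-\lambda\bar w)\|_2=\|w-\lambda\bar w\|_{M^{-1}}$, against the reduced dual of Proposition~\ref{prop:dual_support}(i) is not in the paper. It is a harmless confirmation and is not needed for the proof.
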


\begin{proof}
Since $T=M^{1/2}$ is invertible, the map $(x,z)\mapsto(x,Tz)$ is a bijection on $X\times\mathbb{R}^q$. We verify it maps the constraints and objective of \eqref{P-v} to those of \eqref{P2-v}. For the objective, $M=T^2$ gives
$\|z\|_M = \sqrt{z^\top T^2 z} = \|Tz\|_2 = \|\tilde z\|_2$. For the cone constraint, since $\tilde C=TC$ and $T$ is invertible,
\[
\Gamma(x)-z-v\in -C
\;\iff\;
T\bigl(\Gamma(x)-z-v\bigr)\in -\tilde C
\;\iff\;
\tilde\Gamma(x)-\tilde z-\tilde v \in -\tilde C.
\]
For the slice constraint, since $T$ is symmetric,
$\bar w^\top(v+z) = (T^{-1}\bar w)^\top T(v+z) = \tilde w^\top(\tilde v+\tilde z)$,
so the constraint is preserved since $\tilde\gamma=\gamma$. Hence $(x,z)$ is feasible for \eqref{P-v} with objective $\|z\|_M$ if and only if $(x,\tilde z)$ is feasible for \eqref{P2-v} with objective $\|\tilde z\|_2=\|z\|_M$. Hence, the primal problems are equivalent. On the other hand, for the dual problem, we write $\hat w:=T^{-1}w$ for the transformed dual variable (to distinguish it from the transformed slice direction $\tilde w=T^{-1}\bar w$).
The Lagrangian of \eqref{P-v} is
\[
L(x,z;\,w,\lambda) \;=\; \|z\|_M + w^\top\bigl(\Gamma(x)-z-v\bigr) + \lambda\bigl(\bar w^\top(v+z)-\gamma\bigr),
\]
with $w\in C^+$ and $\lambda\ge 0$, and the dual \eqref{D-v} is
$\sup_{w\in C^+,\,\lambda\ge 0}\;\inf_{x\in X,\,z\in\mathbb{R}^q} L(x,z;w,\lambda)$. We substitute $z=T^{-1}\tilde z$ in the infimum and $w=T\hat w$ in the supremum, keeping $\lambda$ unchanged. The dual cone of $\tilde C=TC$ is $\tilde C^+=T^{-1}C^+$; indeed, since $T$ is symmetric,
\[
u\in\tilde C^+
\;\iff\;
u^\top(Tc)\ge 0\;\forall\,c\in C
\;\iff\;
(Tu)^\top c\ge 0\;\forall\,c\in C
\;\iff\;
Tu\in C^+.
\]
Hence $w=T\hat w\in C^+$ if and only if $\hat w\in\tilde C^+$. Now we compute each term of $L(x,\,T^{-1}\tilde z;\,T\hat w,\,\lambda)$. The first term gives $\|T^{-1}\tilde z\|_M = \|\tilde z\|_2$ as in the primal part.
For the second term, since $T$ is symmetric,
\[
(T\hat w)^\top\bigl(\Gamma(x)-T^{-1}\tilde z-v\bigr)
= \hat w^\top T\bigl(\Gamma(x)-T^{-1}\tilde z-v\bigr)
= \hat w^\top\bigl(\tilde\Gamma(x)-\tilde z-\tilde v\bigr).
\]
The third term gives
$\lambda\bigl(\bar w^\top(v+T^{-1}\tilde z)-\gamma\bigr) = \lambda\bigl(\tilde w^\top(\tilde v+\tilde z)-\tilde\gamma\bigr)$
by the slice constraint calculation from the primal part. Combining,
\[
L\bigl(x,\,T^{-1}\tilde z;\,T\hat w,\,\lambda\bigr)
= \|\tilde z\|_2 + \hat w^\top\bigl(\tilde\Gamma(x)-\tilde z-\tilde v\bigr) + \lambda\bigl(\tilde w^\top(\tilde v+\tilde z)-\tilde\gamma\bigr),
\]
which is precisely the Lagrangian of \eqref{P2-v} with Euclidean norm and dual variables $(\hat w,\lambda)\in\tilde C^+\times[0,\infty)$. Since the bijections $z\leftrightarrow\tilde z=Tz$ on primal variables and $w\leftrightarrow\hat w=T^{-1}w$ on dual variables preserve both the Lagrangian and the feasible sets, the dual problems have equal optimal values and corresponding optimal solutions.
\end{proof}

\begin{proposition}[Algorithm equivalence]\label{prop:algorithm-equivalence}
Consider the outer approximation algorithm of \cite[Algorithm 1]{ararat_convergence_2024}, where in each iteration a problem of the form \eqref{P-v} is solved and supporting halfspaces of $A$ are generated via dual optimal solutions.  Let $(A_k)_{k\ge 0}$ be the sequence of outer approximations in the original coordinates, and let
\[
\tilde A_k := T A_k,\quad k\ge 0.
\]
Then $(\tilde A_k)_{k\ge 0}$ is precisely the sequence of outer approximations obtained by applying the same algorithm to the transformed problem \eqref{Ptilde}, using the Euclidean scalarization \eqref{P2-v}.
\end{proposition}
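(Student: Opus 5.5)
The argument is an induction on $k$. At every step the algorithm's state in the original coordinates is mapped by $T$ onto the state of the same algorithm run on \eqref{Ptilde} with the Euclidean scalarization \eqref{P2-v}. The state consists of the current polytope $A_k$, its vertex set, the chosen vertex $v$, and the scalarization data. Concretely, I will show that for all $k$ the following hold: $\tilde A_k = TA_k$; $\operatorname{ext}\tilde A_k = T(\operatorname{ext}A_k)$; and the halfspace added at iteration $k$ in the transformed run is the image under $T$ of the halfspace added in the original run.

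\textbf{Base case.} The initial outer approximation in \cite[Algorithm 1]{ararat_convergence_2024} is built from supporting halfspaces of $A$ with normals taken from the generators of $C^+$, together with the slice halfspace $S(\gamma)$.

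For a halfspace $H=\{y: w^\top y\ge b\}$ one has $TH=\{\tilde y: (T^{-1}w)^\top\tilde y\ge b\}$, because $T$ is symmetric. Since $\tilde C^+=T^{-1}C^+$, as computed in the proof of Proposition~\ref{prop:scalarization-transform}, the generators of $\tilde C^+$ are exactly the images $T^{-1}w$ of the generators of $C^+$. Moreover,
\[
h_{\tilde A}(T^{-1}w)=\inf_{y\in A}(T^{-1}w)^\top Ty=h_A(w).
\]
We also have $T(S(\gamma))=\tilde S(\tilde\gamma)$. Since $T$ is a bijection, it commutes with intersections, and therefore $\tilde A_0=TA_0$.

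If instead the initialization solves weighted-sum problems, the same identity $\hat w^\top\tilde\Gamma(x)=(T\hat w)^\top\Gamma(x)$ shows that the two runs produce the same minimizers $x$.

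\textbf{Inductive step.} Assume $\tilde A_k=TA_k$. A linear bijection maps extreme points to extreme points, so $\operatorname{ext}\tilde A_k=T\operatorname{ext}A_k$.

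The vertex-selection rule and the stopping test are based on the optimal value $\|z^\star\|_M$, i.e. the distance to $A$. By Proposition~\ref{prop:scalarization-transform}, \eqref{P-v} and \eqref{P2-v} with $\tilde v=Tv$ have equal optimal values. Hence:
\begin{itemize}
\item the selected vertex corresponds under $T$ (with any tie-breaking rule fixed compatibly through the bijection);
\item the stopping criteria trigger at the same iteration.
\end{itemize}

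By the same proposition, optimal primal–dual pairs correspond via $\tilde z^\star=Tz^\star$ and $\hat w^\star=T^{-1}w^\star$, with $\lambda^\star$ unchanged. Then $\tilde y^\star=\tilde v+\tilde z^\star=Ty^\star$. The new cut in the transformed run is
\[
\{\tilde y:\hat w^{\star\top}\tilde y\ge\hat w^{\star\top}\tilde y^\star\}.
\]
Since $\hat w^{\star\top}\tilde y=w^{\star\top}T^{-1}\tilde y$ and $\hat w^{\star\top}\tilde y^\star=w^{\star\top}y^\star$, this cut equals $TH^\star$, where $H^\star$ is the halfspace of Proposition~\ref{prop:dual_support}(ii). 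Therefore
\[
\tilde A_{k+1}=\tilde A_k\cap TH^\star=T(A_k\cap H^\star)=TA_{k+1}.
\]
The same correspondence applies to the weak minimizers: $x^\star$ is shared by both runs.

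\textbf{Main obstacle.} The delicate point is uniqueness. The optimal $z^\star$ is unique because the Euclidean norm is strictly convex and the feasible image set is convex. The dual optimal $w^\star$, however, need not be unique, and the algorithm may also break ties among vertices with equal distances.

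I would handle this by making "precisely" mean that the correspondence $w\leftrightarrow T^{-1}w$ is a bijection between the sets of dual optimal solutions, as established in Proposition~\ref{prop:scalarization-transform}. Any selection rule in one run then induces a selection in the other. In particular, for every run of the original algorithm there is a run of the transformed algorithm whose iterates are $TA_k$, and conversely. I will state this convention explicitly, and otherwise assume the algorithm makes these choices through a deterministic rule transported by $T$.
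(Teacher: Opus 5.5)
Your proof is correct and follows essentially the same route as the paper: induction on $k$, using the halfspace identity $T(\mathcal{H}(w,A))=\mathcal{H}(T^{-1}w,\tilde A)$ and $\tilde C^+=T^{-1}C^+$ for the base case, and in the inductive step the correspondence of extreme points, optimal values, primal--dual pairs and cuts from Proposition~\ref{prop:scalarization-transform}. Your explicit convention for tie-breaking and non-unique dual solutions makes precise a point the paper leaves implicit. Your cut computation with normal $w^\star$ also carries over verbatim to the combined normal $w^\star-\lambda^\star\bar w$, which the paper covers by treating the cut normal as an arbitrary linear function of $(w_k,\lambda_k,\bar w)$.
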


\begin{proof}
We note that $T$ maps supporting halfspaces of $A$ to those of $\tilde A=TA$. Indeed, using $h_A(w)=\inf_{y\in A} w^\top y$ and the identity $h_{\tilde A}(T^{-1}w) = \inf_{\tilde y\in \tilde A}(T^{-1}w)^\top\tilde y = \inf_{y\in A} w^\top y = h_A(w)$, we get
\begin{equation}\label{eq:halfspace_transform}
T\bigl(\mathcal{H}(w,A)\bigr)
= \bigl\{\tilde y: (T^{-1}w)^\top \tilde y \ge h_A(w)\bigr\}
= \mathcal{H}(T^{-1}w,\,\tilde A).
\end{equation}

We now proceed by induction on $k$. The initial outer approximation is $A_0 = \bigl(\bigcap_{j=1}^I \mathcal{H}(\omega_j,A)\bigr)\cap S(\gamma)$ for some directions $\omega_1,\ldots,\omega_I\in C^+\setminus\{0\}$ that are obtained from weighted-sum scalarizations $\min_{x\in X}\omega_j^\top\Gamma(x)$. Since $T$ commutes with finite intersections,
\[
TA_0
= \bigcap_{j=1}^I T\bigl(\mathcal{H}(\omega_j,A)\bigr) \;\cap\; T\bigl(S(\gamma)\bigr)
= \bigcap_{j=1}^I \mathcal{H}(T^{-1}\omega_j,\,\tilde A) \;\cap\; \tilde S(\tilde\gamma),
\]
using \eqref{eq:halfspace_transform} and $T(S(\gamma))=\tilde S(\tilde\gamma)$.
This is precisely the initial outer approximation $\tilde A_0$ for the transformed problem \eqref{Ptilde}: the directions $T^{-1}\omega_j\in\tilde C^+\setminus\{0\}$ are the transformed generators, and the weighted-sum scalarizations yield the same optimizers $x_j\in X$ since $(T^{-1}\omega_j)^\top\tilde\Gamma(x)=\omega_j^\top\Gamma(x)$. Hence $\tilde A_0=TA_0$. Now, assume $\tilde A_k=TA_k$. At iteration $k$, the algorithm performs three operations. The original algorithm selects $v_k\in\arg\max_{v\in\operatorname{ext}(A_k)}\|z^v\|_M$, where $z^v$ is the optimal displacement from \eqref{P-v}. Since the invertible linear map $T$ preserves extreme points (for any convex set $Q$: if $Tv=\lambda Ty_1+(1-\lambda)Ty_2$ with $Ty_1,Ty_2\in T Q$, then $v=\lambda y_1+(1-\lambda)y_2$ with $y_1,y_2\in Q$), we have $\operatorname{ext}(\tilde A_k)=T(\operatorname{ext}(A_k))$. By Proposition~\ref{prop:scalarization-transform}, $\|z^v\|_M=\|\tilde z^{Tv}\|_2$ for each $v\in\operatorname{ext}(A_k)$, so $\tilde v_k:=Tv_k\in\arg\max_{\tilde v\in\operatorname{ext}(\tilde A_k)}\|\tilde z^{\tilde v}\|_2$, which is the selection rule for the transformed algorithm. By Proposition~\ref{prop:scalarization-transform}, solving \eqref{P-v} at $v_k$ with norm $\|\cdot\|_M$ yields a primal--dual pair $(x_k,z_k;\,w_k,\lambda_k)$ corresponding to the pair $(x_k,Tz_k;\,T^{-1}w_k,\lambda_k)$ for the Euclidean norm-based scalarization \eqref{P2-v} at $\tilde v_k$.
The boundary point $y_k=v_k+z_k$ maps to $\tilde y_k=Ty_k=\tilde v_k+\tilde z_k$.
The cut halfspace $H_{k+1}=\{y:a_k^\top y\ge a_k^\top y_k\}$, where $a_k$ is a linear function of the dual solution $(w_k,\lambda_k,\bar w)$, transforms as
\[
T(H_{k+1}) = \{\tilde y:(T^{-1}a_k)^\top\tilde y \ge (T^{-1}a_k)^\top\tilde y_k\}.
\]
Since $T^{-1}$ is linear, $T^{-1}a_k$ is the same linear function evaluated at the transformed dual solution $(T^{-1}w_k,\lambda_k,\tilde w)$, so $T(H_{k+1})=\tilde H_{k+1}$. Using the inductive hypothesis and the intersection identity,
\[
TA_{k+1} = T(A_k\cap H_{k+1}) = TA_k\cap TH_{k+1} = \tilde A_k\cap\tilde H_{k+1} = \tilde A_{k+1}.\qedhere
\]
\end{proof}


\section{Inner-Product Norm Invariance and Improved Convergence Rate}\label{sec:inner_product_invariance}

The convergence theory of \cite{ararat_convergence_2024} is built on the Euclidean norm, whose improved rate $O(k^{2/(1-q)})$ exploits its special geometry. The Euclidean norm, however, may not reflect the geometry of a given problem---for instance when the objectives have very different scales---so one would like to measure distances in a problem-adapted way. The inner-product norms $\|\cdot\|_M$ ($M\succ 0$) provide this flexibility, and they are the metrics that the adaptive-metric framework of Section~\ref{sec:adaptive_framework} later tunes. This section shows that the flexibility is free. Every inner-product norm achieves the same rate as the Euclidean one.

Recall from Section~\ref{sec:preliminaries} the compact slice $A := P\cap S(\gamma)$ and its transform $\tilde A := T(A)$. For a convex compact set $B\subset\mathbb{R}^q$, denote by $R(B)$ the radius of the smallest Euclidean ball containing $B$. Let $\pi_q$ be the volume of the Euclidean unit ball in $\mathbb{R}^q$, and define
\[
\bar\lambda(B) := 16\,R(B)\left(\frac{q\,\pi_q}{\pi_{q-1}}\right)^{\frac{2}{q-1}}.
\]

The following is a direct consequence of the Euclidean convergence result in \cite[Thm.~7.2]{ararat_convergence_2024} and Propositions~\ref{prop:scalarization-transform}--\ref{prop:algorithm-equivalence}.

\begin{theorem}[Improved convergence rate under inner-product norms]\label{thm:inner-product-rate}
Assume Assumption~\ref{ass:standing}, and let $\|\cdot\|_M$ be an inner-product norm induced by $M\succ 0$. Let $(A_k)_{k\ge 0}$ be the sequence of outer approximations of the compact slice $A=P\cap S(\gamma)$ generated by the norm-minimization--based algorithm of \cite{ararat_convergence_2024}, where the scalarizations and Hausdorff distance are based on $\|\cdot\|_M$.

Then the following statements hold.
\begin{enumerate}
\item For every $\varepsilon>0$ there exists $K\in\mathbb{N}$ such that for all $k\ge K$,
\[
\delta_H(A_k,A;\|\cdot\|_M)
\;\le\; (1+\varepsilon)\,\bar\lambda(\tilde A)\,k^{\frac{2}{1-q}},
\]
where $\tilde A = T(A)=TP\cap T(S(\gamma))$.
\item In particular,
\[
\delta_H(A_k,A;\|\cdot\|_M) = O\bigl(k^{\frac{2}{1-q}}\bigr)
\quad\text{as }k\to\infty.
\]
\end{enumerate}
\end{theorem}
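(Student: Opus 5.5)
The plan is a transfer argument. We reduce the statement to the Euclidean rate theorem \cite[Thm.~7.2]{ararat_convergence_2024}, applied to the transformed problem \eqref{Ptilde}, and then pull the bound back with Lemma~\ref{lem:hausdorff-isometry}.

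The first step is to check that \eqref{Ptilde} satisfies every hypothesis of \cite[Thm.~7.2]{ararat_convergence_2024}.
\begin{itemize}
\item $X$ is unchanged, so it is still compact and convex with nonempty interior.
\item $\tilde\Gamma=T\Gamma$ is continuous.
\item $\tilde\Gamma$ is $\tilde C$-convex: applying $T$ to $\lambda\Gamma(x_1)+(1-\lambda)\Gamma(x_2)-\Gamma(\lambda x_1+(1-\lambda)x_2)\in C$ gives the corresponding element of $TC=\tilde C$.
\item $\tilde C$ is closed, solid, pointed and polyhedral, since $T$ is an invertible linear map.
\item The slice data also transfer. $\tilde w=T^{-1}\bar w$ lies in $\operatorname{int}\tilde C^+$, because $\tilde C^+=T^{-1}C^+$ (shown in the proof of Proposition~\ref{prop:scalarization-transform}) and $T^{-1}$ is a homeomorphism. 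Moreover $\tilde\Gamma(X)=T\Gamma(X)\subset T(\operatorname{int}S(\gamma))=\operatorname{int}\tilde S(\tilde\gamma)$.
\end{itemize}
Hence $\tilde A=T(A)$ is exactly the compact slice that the Euclidean theory associates with \eqref{Ptilde}. I also need to confirm that any further requirement of \cite[Thm.~7.2]{ararat_convergence_2024} (for instance, the vertex-selection rule or the initialization) is expressed purely through the algorithm. Those requirements then hold for the transformed run by Proposition~\ref{prop:algorithm-equivalence}.

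The second step applies Proposition~\ref{prop:algorithm-equivalence}. The sequence $\tilde A_k:=TA_k$ is precisely the sequence produced by the Euclidean algorithm on \eqref{Ptilde}. The Euclidean theorem then says that for every $\varepsilon>0$ there is a $K$ with $\delta_H(\tilde A_k,\tilde A;\|\cdot\|_2)\le(1+\varepsilon)\bar\lambda(\tilde A)k^{2/(1-q)}$ for all $k\ge K$. Here the constant involves the circumradius $R(\tilde A)$ of the transformed slice. That is why the statement is phrased with $\bar\lambda(\tilde A)$ rather than $\bar\lambda(A)$.

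The third step uses Lemma~\ref{lem:hausdorff-isometry}, which gives $\delta_H(A_k,A;\|\cdot\|_M)=\delta_H(TA_k,TA;\|\cdot\|_2)$. This yields (i) immediately, and (ii) follows since $\bar\lambda(\tilde A)$ is a finite constant independent of $k$, as $\tilde A$ is compact.

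I expect the main obstacle to be the first step. The task is to make sure the hypotheses and the algorithmic conventions of \cite[Thm.~7.2]{ararat_convergence_2024} match the transformed instance exactly: the normalization of the cut normals, the initial polytope $\tilde A_0$, and the fact that the halfspaces produced there are the transformed ones. Proposition~\ref{prop:algorithm-equivalence} already handles the halfspace correspondence, so the remaining work is bookkeeping. If that theorem needs an extra assumption, such as a specific scaling of $\bar w$, I would rescale $\tilde w$ and $\tilde\gamma$ by a common positive factor. This leaves $\tilde S(\tilde\gamma)$, and therefore $\tilde A$, unchanged.
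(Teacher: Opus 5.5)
Your proposal is correct and follows the paper's proof: you transfer the run to the transformed problem via Proposition~\ref{prop:algorithm-equivalence}, invoke \cite[Thm.~7.2]{ararat_convergence_2024} for $\tilde A_k=TA_k$, and pull the bound back with Lemma~\ref{lem:hausdorff-isometry}; your explicit check that the transformed data satisfy the standing hypotheses (for example $\tilde w\in\operatorname{int}\tilde C^+$ and $\tilde\Gamma(X)\subset\operatorname{int}\tilde S(\tilde\gamma)$) is a useful addition that the paper leaves implicit. You gloss over two pieces of bookkeeping the paper does handle, both easily fixed: the cited theorem is stated only for $\varepsilon\in(0,1)$, so larger $\varepsilon$ must be obtained from $\varepsilon'=1/2$; and it bounds the $(k-1)$-indexed iterate, which is handled by applying it at index $k+1$ and using $(k+1)^{2/(1-q)}\le k^{2/(1-q)}$.
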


\begin{proof}
It suffices to prove the bound for $\varepsilon\in(0,1)$; for $\varepsilon\ge1$ it then follows from the case $\varepsilon'=1/2$, since $1+\varepsilon'\le1+\varepsilon$. By Proposition~\ref{prop:algorithm-equivalence}, $(\tilde A_k)_{k\ge 0}$ with $\tilde A_k := TA_k$ is the sequence of outer approximations of the transformed slice $\tilde A=TA$ generated by the Euclidean version of the algorithm applied to the transformed problem \eqref{Ptilde} using the Euclidean scalarization \eqref{P2-v}. By \cite[Thm.~7.2]{ararat_convergence_2024} (which applies for $\varepsilon\in(0,1)$), there exists $K\in\mathbb{N}$ such that
\[
\delta_H(\tilde A_k,\tilde A;\|\cdot\|_2)
\;\le\; (1+\varepsilon)\,\bar\lambda(\tilde A)\,k^{\frac{2}{1-q}}
\quad\text{for all }k\ge K
\]
(\cite[Thm.~7.2]{ararat_convergence_2024} bounds a $(k-1)$-indexed iterate; applied at index $k+1$ it gives the bound $(1+\varepsilon)\bar\lambda(\tilde A)(k+1)^{2/(1-q)}$, and since $2/(1-q)<0$ we have $(k+1)^{2/(1-q)}\le k^{2/(1-q)}$, yielding the display above). Using Lemma~\ref{lem:hausdorff-isometry} with the sets $A_k$ and $A$, we obtain
\[
\delta_H(A_k,A;\|\cdot\|_M)
= \delta_H(TA_k,TA;\|\cdot\|_2)
= \delta_H(\tilde A_k,\tilde A;\|\cdot\|_2),
\]
which yields the claim.
\end{proof}

\begin{remark}[Role of the metric in convergence bounds]
Theorem~\ref{thm:inner-product-rate} shows that the improved exponent $2/(1-q)$ is not tied to the standard Euclidean structure but to the Hilbertian geometry of the norm. The choice of $M$ enters only through the multiplicative constant: the theorem gives $\delta_H(A_k,A;\|\cdot\|_M)\le(1+\varepsilon)\,\bar\lambda(\tilde A)\,k^{2/(1-q)}$, where the constant $\bar\lambda(\tilde A)$ depends on the transformed slice $\tilde A=TA$ (through its circumradius). A badly conditioned $M$---one with large condition number $\kappa(M)=\lambda_{\max}(M)/\lambda_{\min}(M)$---distorts $\tilde A$ and inflates $\bar\lambda(\tilde A)$, enlarging the constant, even though the asymptotic exponent $2/(1-q)$ is unchanged.
\end{remark}


\section{Adaptive-Metric Framework}\label{sec:adaptive_framework}

While Section~\ref{sec:inner_product_invariance} establishes that any fixed inner-product norm achieves the optimal convergence exponent, a natural question arises: \rev{can iteration-dependent metrics adapt to the problem geometry while preserving the convergence guarantee? We show in Section~\ref{sec:convergence} that they retain the improved exponent $2/(1-q)$ (Theorem~\ref{thm:rate}), the conditioning of the metric governing only the multiplicative constant.} In this section, we introduce an adaptive-metric framework in which the scalarization norm varies across iterations while the approximation error is measured in a fixed Euclidean norm.

\subsection{Framework Design: Fixed Euclidean Accuracy}

Throughout this section, the approximation quality is measured in the fixed Euclidean norm $\|\cdot\|_2$ on $\mathbb{R}^q$. The algorithm employs an iteration-dependent inner product norm $\|\cdot\|_{M_k}$ only inside the scalarization subproblems used to generate supporting halfspaces.

Let $B\subseteq \mathbb{R}^q$ be a closed convex set, and let $B_k\supseteq B$ be a polyhedral outer approximation that has at least one vertex and the same recession cone, $\mathrm{rec}(B_k)=\mathrm{rec}(B)$; this equal-recession-cone condition guarantees that $\delta_H(B_k,B;\|\cdot\|_2)$ is finite and attained at a vertex of $B_k$. In the convex vector optimization application, $P=\Gamma(X)+C$ is the (unbounded) upper image, with $\mathrm{rec}(P)=C$; equivalently, and as used in Theorems~\ref{thm:euclid_convergence_adaptive} and~\ref{thm:rate}, one may work with the compact slice $A=P\cap S(\gamma)$ and \rev{compact polyhedral outer approximations $A_k\supseteq A$ generated directly by the algorithm below} (for which $\mathrm{rec}=\{0\}$). \rev{The adaptive algorithm of this section operates on this compact-slice formulation: each generated halfspace contains $A$ (Lemma~\ref{lem:cut_removes_vertex})---but need not contain the unbounded image $P$, since the combined cut-normal $g_i$ need not lie in $C^+$---so $A\subseteq A_{k+1}\subseteq A_k$, and the error is measured as $\max_{v\in \operatorname{ext}(A_k)}d_2(v,A)$.}

\begin{revblock}
We define the Euclidean outer Hausdorff error of the compact outer approximation $A_k$ relative to the slice $A$ by
\[
\mathrm{err}_2(A_k,A)
:= \delta_H(A_k,A;\|\cdot\|_2)
= \max_{v\in \operatorname{ext}(A_k)} d_2(v,A),
\qquad
d_2(v,A):=\inf_{y\in A}\|v-y\|_2,
\]
where $\operatorname{ext}(A_k)$ denotes the vertex set of $A_k$.
\end{revblock} 
\begin{revblock}
Assume that iteration $k$ ($k\ge 0$) produces a valid supporting halfspace
\[
H_{k+1} := \{y\in \mathbb{R}^q:\; g_{k+1}^\top y \ge g_{k+1}^\top y_k\},
\qquad
g_{k+1} := w_{k+1}-\lambda_{k+1}\bar w,
\]
that contains the compact slice $A=P\cap S(\gamma)$ (i.e., $A\subseteq H_{k+1}$) and supports it at the boundary point $y_k$ generated at iteration $k$. Here $g_{k+1}$ is the \emph{combined cut-normal} formed from the scalarization dual pair $(w_{k+1},\lambda_{k+1})\in C^+\times[0,\infty)$, exactly as in Lemma~\ref{lem:cut_removes_vertex}; in general $g_{k+1}\notin C^+$. Moreover $g_{k+1}\neq0$: by Lemma~\ref{lem:cut_removes_vertex} it equals $M z/\|z\|_{M}$, where $M$ and $z$ are the metric and optimal displacement of the scalarization that generates the cut, and this is nonzero because a cut is generated only when the reference vertex lies outside $A$, i.e.\ $z\neq0$. Hence the normalized cut-normals below are well defined. Define, for each $i\ge 1$, the normalized cut-normals
\[
u_i := \frac{g_i}{\|g_i\|_2}.
\]
\end{revblock}
Fix a regularization parameter $\varepsilon_0>0$ and define, for $k\ge 1$, the symmetric positive definite $q\times q$ matrix
\begin{equation*}
M_k := \varepsilon_0 I \;+\; \frac{1}{k}\sum_{i=1}^k u_i u_i^\top,
\qquad
\|d\|_{M_k}:=\sqrt{d^\top M_k d}.
\end{equation*}
\rev{We set $M_0:=\varepsilon_0 I$. At each iteration, the cut-normal $u_{k+1}$ is generated by solving the scalarization with the \emph{current} metric $M_k$, and the metric is then updated to $M_{k+1}$ by the running average above. Thus each $M_k$ depends only on the cut-normals $u_1,\dots,u_k$ produced at earlier iterations, so the recurrence $M_k\mapsto u_{k+1}\mapsto M_{k+1}$ is well defined and non-circular.}

\begin{lemma}[Uniform spectral bounds and uniform norm equivalence]\label{lem:spec_bounds}
For all $k\ge 0$,
\[
\varepsilon_0 I \preceq M_k \preceq (\varepsilon_0+1)I.
\]
Consequently, for all $d\in\mathbb{R}^q$ and all $k\ge 0$,
\[
\sqrt{\varepsilon_0}\,\|d\|_2 \le \|d\|_{M_k} \le \sqrt{\varepsilon_0+1}\,\|d\|_2,
\]
and
\[
\frac{1}{\sqrt{\varepsilon_0+1}}\,\|d\|_2 \le \|d\|_{M_k^{-1}} \le \frac{1}{\sqrt{\varepsilon_0}}\,\|d\|_2.
\]
\end{lemma}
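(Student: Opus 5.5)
The plan is to bound the averaged rank-one part in the Loewner order and then translate the matrix bounds into norm bounds by elementary Rayleigh-quotient arguments.

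\textbf{Step 1: the case $k=0$.} Here $M_0=\varepsilon_0 I$, so the claim is immediate.

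\textbf{Step 2: bounding the rank-one average.} Fix $k\ge1$ and set $S_k:=\frac1k\sum_{i=1}^k u_iu_i^\top$. Each $u_i$ is well defined with $\|u_i\|_2=1$, because $g_i\neq0$ as noted just before the definition of $u_i$. For any $d\in\mathbb{R}^q$,
\[
d^\top u_iu_i^\top d=(u_i^\top d)^2\in\bigl[0,\|d\|_2^2\bigr]
\]
by Cauchy--Schwarz. Averaging over $i$ gives $0\le d^\top S_k d\le \|d\|_2^2$, that is, $0\preceq S_k\preceq I$. Adding $\varepsilon_0 I$ then yields $\varepsilon_0 I\preceq M_k\preceq(\varepsilon_0+1)I$. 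In particular $M_k\succ0$, so $M_k^{-1}$ exists.

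\textbf{Step 3: primal norm equivalence.} Rewrite the Loewner bounds as $\varepsilon_0\|d\|_2^2\le d^\top M_kd\le(\varepsilon_0+1)\|d\|_2^2$ and take square roots.

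\textbf{Step 4: dual norm equivalence.} Since $M_k$ is symmetric positive definite, diagonalize it orthogonally. The eigenvalues of $M_k^{-1}$ are the reciprocals of those of $M_k$, so they lie in $[1/(\varepsilon_0+1),\,1/\varepsilon_0]$. Hence
\[
\frac{1}{\varepsilon_0+1}I\preceq M_k^{-1}\preceq \frac1{\varepsilon_0}I,
\]
and the same Rayleigh-quotient argument with square roots gives the second chain of inequalities. Equivalently, one can invoke the standard fact that $0\prec A\preceq B$ implies $B^{-1}\preceq A^{-1}$.

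There is no genuine obstacle in this argument. The only point requiring care is that the $u_i$ are unit vectors, which relies on the nonvanishing of the cut-normals $g_i$ established just before the definition of $u_i$.
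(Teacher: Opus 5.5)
Your proposal is correct and follows essentially the same route as the paper: the $k=0$ case is immediate, $0\preceq u_iu_i^\top\preceq I$ for unit vectors is averaged and shifted by $\varepsilon_0 I$, and Rayleigh-quotient bounds for $M_k$ and $M_k^{-1}$ give the norm equivalences. Your added details (Cauchy--Schwarz for the rank-one bound, reciprocal eigenvalues for $M_k^{-1}$, and nonvanishing $g_i$ for well-defined $u_i$) just make explicit what the paper leaves implicit.
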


\begin{proof}
\rev{For $k=0$, $M_0=\varepsilon_0 I$ satisfies the bounds directly; assume $k\ge 1$.} Since $\|u_i\|_2=1$, we have $0\preceq u_i u_i^\top \preceq I$ for all $i$.
Averaging preserves the Loewner order, hence
\[
0\preceq \frac{1}{k}\sum_{i=1}^k u_i u_i^\top \preceq I.
\]
Adding $\varepsilon_0 I$ yields $\varepsilon_0 I\preceq M_k\preceq (\varepsilon_0+1)I$.
The norm inequalities follow from the Rayleigh quotient bounds:
$\lambda_{\min}(M_k)\|d\|_2^2 \le d^\top M_k d \le \lambda_{\max}(M_k)\|d\|_2^2$
and the corresponding inequalities for $M_k^{-1}$.
\end{proof}

\begin{corollary}[Bridge from adaptive metric to Euclidean distance]\label{cor:bridge}
For any nonempty set $S\subseteq\mathbb{R}^q$, any $v\in\mathbb{R}^q$, and any $k\ge 1$,
\[
d_2(v,S)\ \le\ \frac{1}{\sqrt{\varepsilon_0}}\, d_{M_k}(v,S),
\qquad
d_{M_k}(v,S):=\inf_{y\in S}\|v-y\|_{M_k}.
\]
\begin{revblock}In particular, if $d_{M_k}(v,A)\le \sqrt{\varepsilon_0}\,\varepsilon$, then $d_2(v,A)\le \varepsilon$.\end{revblock}
\end{corollary}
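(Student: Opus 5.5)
The plan is to reduce the statement to the pointwise norm equivalence of Lemma~\ref{lem:spec_bounds} and then take infima over $S$. Fix $k\ge 1$, a nonempty set $S\subseteq\mathbb{R}^q$, and $v\in\mathbb{R}^q$. For every $y\in S$, applying the lower bound of Lemma~\ref{lem:spec_bounds} to $d=v-y$ gives
\[
\sqrt{\varepsilon_0}\,\|v-y\|_2\le \|v-y\|_{M_k}.
\]
Since $\varepsilon_0>0$, dividing by $\sqrt{\varepsilon_0}$ and using $d_2(v,S)\le\|v-y\|_2$ yields
\[
d_2(v,S)\le \frac{1}{\sqrt{\varepsilon_0}}\,\|v-y\|_{M_k}\quad\text{for all }y\in S.
\]
The left-hand side does not depend on $y$, so taking the infimum over $y\in S$ on the right gives $d_2(v,S)\le \varepsilon_0^{-1/2}\,d_{M_k}(v,S)$. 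Both infima are finite because $S$ is nonempty, so no attainment or closedness assumption on $S$ is required.

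The ``in particular'' clause then follows by specializing to $S=A$, which is nonempty by Section~\ref{sec:preliminaries}. If $d_{M_k}(v,A)\le\sqrt{\varepsilon_0}\,\varepsilon$, the inequality just proved gives
\[
d_2(v,A)\le \varepsilon_0^{-1/2}\cdot\sqrt{\varepsilon_0}\,\varepsilon=\varepsilon .
\]

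The argument has no real obstacle; it is a monotonicity-of-infimum argument. The only point to watch is that the uniform lower bound $\lambda_{\min}(M_k)\ge\varepsilon_0$ from Lemma~\ref{lem:spec_bounds} holds for every $k$, independently of the cut-normals $u_i$. This is what makes the constant $\varepsilon_0^{-1/2}$ uniform in $k$. The same argument also covers $k=0$, since $M_0=\varepsilon_0 I$.
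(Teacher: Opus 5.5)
Your proof is correct and follows the paper's argument: apply the lower bound $\sqrt{\varepsilon_0}\,\|d\|_2\le\|d\|_{M_k}$ from Lemma~\ref{lem:spec_bounds} to $d=v-y$, then take the infimum over $y\in S$. Your explicit treatment of the ``in particular'' clause, specializing to $S=A$, fills in the one step the paper leaves implicit.
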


\begin{proof}
By Lemma~\ref{lem:spec_bounds}, $\|v-y\|_2 \le \frac{1}{\sqrt{\varepsilon_0}}\|v-y\|_{M_k}$ for all $y$.
Taking the infimum over $y\in S$ establishes the claim.
\end{proof}

\subsection{Adaptive-Metric Outer Approximation Algorithm}

We consider the convex vector optimization setting where the upper image is $P=\Gamma(X)+C$ (under Assumption~\ref{ass:standing}), and we generate compact polyhedral outer approximations $A_k\supseteq A$ of the compact slice $A=P\cap S(\gamma)$ by iteratively adding supporting halfspaces of $A$ obtained from norm-minimization scalarizations. The base algorithm follows the norm-minimization outer approximation framework of \cite{ararat_convergence_2024}; our modification is to use an adaptive metric $\|\cdot\|_{M_k}$ in the scalarization at iteration $k$, while measuring the approximation error in the fixed Euclidean norm. The algorithm proceeds as follows. Starting from an initial outer approximation \rev{$A_0 \supseteq A$ of the compact slice $A$}, at each iteration $k$ we: (i) select a vertex $v_k$ from the current approximation \rev{$A_k$}; (ii) construct the adaptive metric $M_k = \varepsilon_0 I + \frac{1}{k}\sum_{i=1}^k u_i u_i^\top$ \rev{(for $k\ge1$; $M_0=\varepsilon_0 I$)} from the normalized \rev{combined cut-normals $u_i = g_i/\|g_i\|_2$ with $g_i = w_i-\lambda_i\bar w$}; (iii) solve the $M_k$-norm minimization scalarization at $v_k$ to obtain a boundary point \rev{$y_k \in A$} and \rev{combined cut-normal $g_{k+1} = w_{k+1}-\lambda_{k+1}\bar w$}; and (iv) update \rev{$A_{k+1} = A_k \cap H_{k+1}$} where \rev{$H_{k+1} = \{y : g_{k+1}^\top y \geq g_{k+1}^\top y_k\}$}. The algorithm terminates when \rev{$\max_{v \in \operatorname{ext}(A_k)} d_2(v, A) \leq \varepsilon$}. For full algorithmic details, we refer to Algorithm~1 of \cite{ararat_convergence_2024}; our modification is the adaptive metric $M_k$ construction.

\begin{remark}[Practical stopping criterion]\label{rem:practical_stop}
In the full-enumeration runs the algorithm stops on the computable \emph{certified Euclidean upper proxy} $\widehat E_k=\max_{v\in \operatorname{ext}(A_k)}\|z^v\|_2\le\varepsilon$ (the $q=4$ probe/hybrid strategies instead use the candidate-set residual of Section~\ref{sec:vertex-strategies}), where $z^v$ is the optimal displacement of the $M_k$-scalarization at $v$; since $\widehat E_k\ge \max_{v\in \operatorname{ext}(A_k)} d_2(v,A)$ (Corollary~\ref{cor:Ehat} below), this guarantees the Euclidean criterion $\max_{v\in \operatorname{ext}(A_k)} d_2(v,A)\le\varepsilon$. A looser alternative is the metric-norm surrogate $\max_{v\in \operatorname{ext}(A_k)} d_{M_k}(v,A)\le \sqrt{\varepsilon_0}\,\varepsilon$, which also implies $\max_{v\in \operatorname{ext}(A_k)} d_2(v,A)\le \varepsilon$ by Corollary~\ref{cor:bridge}.
\end{remark}

\begin{remark}[Adaptive substitution]\label{rem:adaptive_substitution}
In the adaptive-metric algorithm, $M$ is replaced by $M_k$ at iteration $k$. The only change in Proposition~\ref{prop:dual_support} is the dual constraint $\|w-\lambda\bar w\|_{M^{-1}}\le 1$, which becomes $\|w-\lambda\bar w\|_{M_k^{-1}}\le 1$. All other parts of the supporting-halfspace argument remain unchanged.
\end{remark}

\subsection{Vertex Selection and Distance Proxy}

At iteration $k$, let $A_k$ denote the current polyhedral outer approximation of the slice $A$. For each vertex $v\in \operatorname{ext}(A_k)$ we solve the scalarization $(\mathrm P_{M_k}(v))$ and obtain an optimal $z^v\in\mathbb{R}^q$, and we then select
\begin{equation}\label{eq:vertex_rule}
v^k \in \arg\max\Big\{\ \|z^v\|_{M_k}\ :\ v\in \operatorname{ext}(A_k)\ \Big\}.
\end{equation}

\begin{lemma}[Selection proxy for Euclidean distance]\label{lem:zk_proxy}
Let $A := P\cap S(\gamma)$ where $P:=\Gamma(X)+C$ and $S(\gamma)$ is the fixed slicing halfspace. Fix $k\ge 0$ and $M_k\succ0$. For any $v\in\operatorname{ext}(A_k)$, let $(x^v,z^v)$ be an optimal solution of $(\mathrm P_{M_k}(v))$ and set $y^v:=v+z^v\in A$. Then
\[
d_{M_k}(v,A)=\|z^v\|_{M_k}.
\]
Moreover, if $M_k$ satisfies $\varepsilon_0 I\preceq M_k \preceq (\varepsilon_0+1)I$, then
\[
d_2(v,A)\ \le\ \frac{1}{\sqrt{\varepsilon_0}}\,\|z^v\|_{M_k}.
\]
\end{lemma}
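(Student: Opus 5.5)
We prove the identity $d_{M_k}(v,A)=\|z^v\|_{M_k}$ by establishing the two inequalities, and then obtain the Euclidean bound from Corollary~\ref{cor:bridge}.

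\emph{Feasibility gives $d_{M_k}(v,A)\le\|z^v\|_{M_k}$.} First, $y^v=v+z^v$ lies in $A$. This is exactly the feasibility part of Proposition~\ref{prop:dual_support}(ii), which transfers verbatim to $M_k$ by Remark~\ref{rem:adaptive_substitution}. Concretely, $\Gamma(x^v)-z^v-v\in -C$ gives $y^v\in P$, and the slice constraint gives $y^v\in S(\gamma)$. Hence
\[
d_{M_k}(v,A)\le\|v-y^v\|_{M_k}=\|z^v\|_{M_k}.
\]

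\emph{Reverse inequality by exhibiting feasible points.} Take any $y\in A$. Since $y\in P=\Gamma(X)+C$, we can write $y=\Gamma(x)+c$ with $x\in X$ and $c\in C$. Set $z:=y-v$. Then:
\begin{itemize}
\item $\Gamma(x)-z-v=-c\in -C$, so the cone constraint holds;
\item $\bar w^\top(v+z)=\bar w^\top y\le\gamma$ because $y\in S(\gamma)$.
\end{itemize}
Thus $(x,z)$ is feasible for $(\mathrm P_{M_k}(v))$. Optimality of $(x^v,z^v)$ then gives $\|z^v\|_{M_k}\le\|y-v\|_{M_k}$. Taking the infimum over $y\in A$ yields $\|z^v\|_{M_k}\le d_{M_k}(v,A)$.

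In effect, the feasible set of the scalarization, projected to $y=v+z$, is exactly $A$. So the optimal value is the $M_k$-distance from $v$ to $A$, and $y^v$ is a nearest point. The only mildly delicate point is making sure the decomposition $y=\Gamma(x)+c$ is available for every $y\in A$. This holds by the definition of $P$; no closure issue arises, since $\Gamma(X)$ is compact and $C$ is closed. Existence of the optimal solution itself is part of the hypothesis.

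\emph{Euclidean bound.} Under $\varepsilon_0 I\preceq M_k\preceq(\varepsilon_0+1)I$, Lemma~\ref{lem:spec_bounds} gives $\|d\|_2\le\varepsilon_0^{-1/2}\|d\|_{M_k}$ for all $d$; equivalently, apply Corollary~\ref{cor:bridge} with $S=A$. Combining this with the identity just proved,
\[
d_2(v,A)\le\varepsilon_0^{-1/2}\,d_{M_k}(v,A)=\varepsilon_0^{-1/2}\|z^v\|_{M_k}.
\]
Note that only the lower spectral bound is actually used here. The case $k=0$ is covered as well, since $M_0=\varepsilon_0 I$.
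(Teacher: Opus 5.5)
Your proposal is correct and follows essentially the same route as the paper: feasibility of $(x^v,z^v)$ gives $d_{M_k}(v,A)\le\|z^v\|_{M_k}$, and writing any $y\in A$ as $y=\Gamma(x)+c$ shows that $(x,y-v)$ is feasible, which gives the reverse inequality. The Euclidean bound then follows from the lower spectral bound $\|d\|_2\le\varepsilon_0^{-1/2}\|d\|_{M_k}$, which the paper applies via the Rayleigh-quotient norm equivalence of Lemma~\ref{lem:spec_bounds} before taking the infimum over $A$.
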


\begin{proof}
By feasibility of $(x^v,z^v)$, we have $y^v=v+z^v\in A$, hence $d_{M_k}(v,A)\le \|v-y^v\|_{M_k}=\|z^v\|_{M_k}$.
Conversely, if $y\in A$, then $y=\Gamma(x)+c$ for some $x\in X$ and $c\in C$.
Set $z:=y-v$. Then $(x,z)$ is feasible for $(\mathrm P_{M_k}(v))$ and $\|z\|_{M_k}=\|y-v\|_{M_k}$.
Optimality of $z^v$ yields $\|z^v\|_{M_k}\le \|y-v\|_{M_k}$ for all $y\in A$, hence $\|z^v\|_{M_k}\le d_{M_k}(v,A)$. Therefore equality holds: $d_{M_k}(v,A)=\|z^v\|_{M_k}$. By Lemma~\ref{lem:spec_bounds}, the Euclidean bound follows from the uniform norm equivalence $\|d\|_2 \le \frac{1}{\sqrt{\varepsilon_0}}\|d\|_{M_k}$:
\[
d_2(v,A)=\inf_{y\in A}\|v-y\|_2
\le \frac{1}{\sqrt{\varepsilon_0}}\inf_{y\in A}\|v-y\|_{M_k}
= \frac{1}{\sqrt{\varepsilon_0}}\,\|z^v\|_{M_k}.
\]
\end{proof}

\begin{corollary}[Certified Euclidean upper proxy]\label{cor:Ehat}
Fix $k\ge 0$, assume $\varepsilon_0 I\preceq M_k\preceq(\varepsilon_0+1)I$, and for each $v\in\operatorname{ext}(A_k)$ let $z^v$ be an optimal displacement of $(\mathrm P_{M_k}(v))$. Set
\[
E_k := \max_{v\in\operatorname{ext}(A_k)} d_2(v,A),
\qquad
\widehat E_k := \max_{v\in\operatorname{ext}(A_k)} \|z^v\|_2 .
\]
Then
\[
E_k \;\le\; \widehat E_k \;\le\; \sqrt{\frac{\lambda_{\max}(M_k)}{\lambda_{\min}(M_k)}}\;E_k \;\le\; \sqrt{\frac{\varepsilon_0+1}{\varepsilon_0}}\;E_k .
\]
Thus $\widehat E_k$---the largest Euclidean length of the $M_k$-optimal displacements---is a computable upper bound on the true Euclidean error $E_k$, tight up to the metric-quality factor $\theta_k^{-1}=\sqrt{\lambda_{\max}(M_k)/\lambda_{\min}(M_k)}$ (Definition~\ref{def:theta_k}). It is the quantity reported for the adaptive metric in the full-enumeration runs of Section~\ref{sec:experiments}.
\end{corollary}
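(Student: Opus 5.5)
We need to prove $E_k\le \widehat E_k\le \sqrt{\lambda_{\max}/\lambda_{\min}}\,E_k\le\sqrt{(\varepsilon_0+1)/\varepsilon_0}\,E_k$, and we treat each of the three inequalities in turn, vertex by vertex, before taking maxima over the finite set $\operatorname{ext}(A_k)$.

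\emph{First inequality.} For each $v\in\operatorname{ext}(A_k)$, Lemma~\ref{lem:zk_proxy} gives $y^v=v+z^v\in A$. Hence $d_2(v,A)\le\|v-y^v\|_2=\|z^v\|_2$. Taking the maximum over $v$ yields $E_k\le\widehat E_k$.

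\emph{Second inequality.} This is the key step. Fix $v$ and let $p$ be a Euclidean nearest point of $v$ in $A$; it exists because $A$ is compact. Then $\|v-p\|_2=d_2(v,A)$. By Lemma~\ref{lem:zk_proxy}, $\|z^v\|_{M_k}=d_{M_k}(v,A)\le\|v-p\|_{M_k}$. The Rayleigh bounds give
\[
\sqrt{\lambda_{\min}(M_k)}\,\|z^v\|_2\le\|z^v\|_{M_k}
\quad\text{and}\quad
\|v-p\|_{M_k}\le\sqrt{\lambda_{\max}(M_k)}\,\|v-p\|_2 .
\]
Chaining these three estimates gives
\[
\|z^v\|_2\le\sqrt{\lambda_{\max}(M_k)/\lambda_{\min}(M_k)}\;d_2(v,A)\le\sqrt{\lambda_{\max}(M_k)/\lambda_{\min}(M_k)}\;E_k .
\]
The right-hand side no longer depends on $v$, so taking the maximum over $v$ gives $\widehat E_k\le\sqrt{\lambda_{\max}(M_k)/\lambda_{\min}(M_k)}\,E_k$.

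\emph{Third inequality.} This is the spectral assumption: $\lambda_{\min}(M_k)\ge\varepsilon_0$ and $\lambda_{\max}(M_k)\le\varepsilon_0+1$, as in Lemma~\ref{lem:spec_bounds}.

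The argument has no real obstacle. The only care needed is in the second inequality, which must compare against the Euclidean projection $p$ of $v$ rather than against $y^v$, together with the use of $d_{M_k}$-optimality from Lemma~\ref{lem:zk_proxy}. One should also note that the $M_k$-optimal displacement need not be unique. The argument uses only optimality of $z^v$, so the bound holds for every choice of optimal $z^v$.
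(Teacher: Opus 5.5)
Your proposal is correct and follows the paper's proof essentially step for step: feasibility $v+z^v\in A$ for $E_k\le\widehat E_k$, comparison of $z^v$ with the Euclidean nearest-point displacement via $M_k$-optimality and the Rayleigh bounds for the middle inequality, and the spectral bounds $\varepsilon_0\le\lambda_{\min}(M_k)$, $\lambda_{\max}(M_k)\le\varepsilon_0+1$ for the last. Your added remark that the bound holds for every choice of optimal $z^v$ is also consistent with the paper's argument, which likewise uses only optimality.
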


\begin{proof}
Fix a vertex $v$. Feasibility of $z^v$ gives $v+z^v\in A$, so $d_2(v,A)\le\|z^v\|_2$; taking the maximum over $v$ yields $E_k\le\widehat E_k$. For the reverse bound, let $z_2^v$ be a Euclidean nearest-point displacement, so $v+z_2^v\in A$ and $\|z_2^v\|_2=d_2(v,A)$. Since $z^v$ minimizes $\|\cdot\|_{M_k}$ over displacements landing in $A$, Lemma~\ref{lem:spec_bounds} gives
\[
\|z^v\|_2
\le \frac{1}{\sqrt{\lambda_{\min}(M_k)}}\,\|z^v\|_{M_k}
\le \frac{1}{\sqrt{\lambda_{\min}(M_k)}}\,\|z_2^v\|_{M_k}
\le \sqrt{\frac{\lambda_{\max}(M_k)}{\lambda_{\min}(M_k)}}\;d_2(v,A).
\]
Taking the maximum over $v$ yields $\widehat E_k\le\sqrt{\lambda_{\max}(M_k)/\lambda_{\min}(M_k)}\,E_k$; finally $\lambda_{\min}(M_k)\ge\varepsilon_0$ and $\lambda_{\max}(M_k)\le\varepsilon_0+1$ give the last inequality.
\end{proof}

\begin{remark}\label{rem:vertex_rule_consistency}
Vertex selection, given by~\eqref{eq:vertex_rule}, coincides with the baseline rule of \cite{ararat_norm_2022,ararat_convergence_2024} when the norm in the scalarization objective is fixed. In the adaptive-metric setting, \eqref{eq:vertex_rule} selects a vertex maximizing the $M_k$-distance to $A$. By Lemmas~\ref{lem:spec_bounds} and~\ref{lem:zk_proxy}, this also controls the Euclidean distance to $A$ up to the constant $1/\sqrt{\varepsilon_0}$, which is what is needed for the accuracy guarantees of Theorem~\ref{thm:baseline_correctness_slice}, stated in the fixed Euclidean norm.
\end{remark}


\section{Convergence Analysis}\label{sec:convergence}

This section establishes the convergence results for the adaptive-metric algorithm. We first prove baseline correctness (Theorem~\ref{thm:baseline_correctness_slice}) and the key lemma showing that each cut removes the selected vertex (Lemma~\ref{lem:cut_removes_vertex}), and then establish qualitative Euclidean convergence (Theorem~\ref{thm:euclid_convergence_adaptive}), independently of any dispersion hypothesis. \rev{The remainder of the section derives the improved $O(k^{2/(1-q)})$ convergence rate of the adaptive-metric algorithm (Theorem~\ref{thm:rate}), by adapting the Euclidean packing argument to the iteration-dependent metric.}


\begin{theorem}[Baseline correctness (fixed Euclidean accuracy on the slice)]
\label{thm:baseline_correctness_slice}
Assume Assumption~\ref{ass:standing} and the slice construction of Section~\ref{sec:preliminaries}. Let $A := P\cap S(\gamma)$ and let $\{A_k\}_{k\ge 0}$ be generated by iteratively intersecting the outer approximation with the supporting halfspaces obtained from the scalarizations $(\mathrm P_{M_k}(v))$, with $M_k\succ 0$, and let each generated halfspace $H_{k+1}$ satisfy $A\subseteq H_{k+1}$. Then:
\begin{enumerate}
\item[(i)] (Outer-approximation invariance) For all $k\ge 0$,
\[
A\ \subseteq\ A_{k+1}\ \subseteq\ A_{k}.
\]

\item[(ii)] (Correctness upon Euclidean termination)
If for some $\bar k$ one has $\mathrm{err}_2(A_{\bar k},A)\le \varepsilon$, then $A_{\bar k}$ is a Euclidean $\varepsilon$-outer approximation of $A$, i.e.,
\[
\delta_H(A_{\bar k},A;\|\cdot\|_2)\le \varepsilon.
\]

\item[(iii)] (Sufficient computable stopping test via the adaptive metric)
Assume additionally that the adaptive metrics satisfy the uniform spectral bounds $\varepsilon_0 I\preceq M_k\preceq (\varepsilon_0+1)I$ for all $k$ (as holds for the running-average metric $M_k=\varepsilon_0 I+\frac1k\sum_{i=1}^k u_i u_i^\top$; see Lemma~\ref{lem:spec_bounds}). If for some $\bar k$,
\[
\max_{v\in \operatorname{ext}(A_{\bar k})} \|z^v\|_{M_{\bar k}}
\ \le\ \sqrt{\varepsilon_0}\,\varepsilon,
\]
then $\mathrm{err}_2(A_{\bar k},A)\le \varepsilon$, which by definition of $\mathrm{err}_2$ is exactly the Euclidean Hausdorff bound $\delta_H(A_{\bar k},A;\|\cdot\|_2)\le \varepsilon$.
\end{enumerate}
\end{theorem}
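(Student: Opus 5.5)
The three parts are nearly immediate consequences of the construction and of earlier results, so I will verify them in order.

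For (i), I will argue by induction on $k$. The base case $A\subseteq A_0$ holds by the choice of the initial outer approximation: $A_0$ is an intersection of supporting halfspaces of $A$ from weighted-sum scalarizations together with $S(\gamma)$, each of which contains $A$. For the inductive step, assume $A\subseteq A_k$. The update is $A_{k+1}=A_k\cap H_{k+1}$, so $A_{k+1}\subseteq A_k$ holds trivially. The hypothesis $A\subseteq H_{k+1}$ then gives $A\subseteq A_k\cap H_{k+1}=A_{k+1}$. Validity of $H_{k+1}$ is an assumption of the theorem; it is also justified by Proposition~\ref{prop:dual_support}(ii) with the substitution of Remark~\ref{rem:adaptive_substitution}, and later by Lemma~\ref{lem:cut_removes_vertex} for the combined normal $g_{k+1}$. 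No further work is needed there.

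For (ii), I will show that the Hausdorff distance reduces to the one-sided vertex error. By (i), $A\subseteq A_{\bar k}$, so $\sup_{z\in A}d_2(z,A_{\bar k})=0$ and $\delta_H(A_{\bar k},A;\|\cdot\|_2)=\sup_{y\in A_{\bar k}}d_2(y,A)$. Since $A$ is convex, $y\mapsto d_2(y,A)$ is convex. Since $A_{\bar k}$ is a compact polytope (bounded because $A_0$ is, and closed under intersection), it equals the convex hull of its vertices. A convex function on a polytope attains its maximum at a vertex, so the supremum equals $\max_{v\in\operatorname{ext}(A_{\bar k})}d_2(v,A)=\mathrm{err}_2(A_{\bar k},A)\le\varepsilon$. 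This is consistent with the displayed definition of $\mathrm{err}_2$, and I would include this justification to make the identity $\delta_H=\max_v d_2(v,A)$ explicit.

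For (iii), I will fix a vertex $v\in\operatorname{ext}(A_{\bar k})$ and apply Lemma~\ref{lem:zk_proxy} under the assumed spectral bounds. This gives $d_2(v,A)\le\varepsilon_0^{-1/2}\|z^v\|_{M_{\bar k}}\le\varepsilon_0^{-1/2}\sqrt{\varepsilon_0}\,\varepsilon=\varepsilon$. Equivalently, Lemma~\ref{lem:zk_proxy} gives $d_{M_{\bar k}}(v,A)=\|z^v\|_{M_{\bar k}}$, and Corollary~\ref{cor:bridge} then yields the same bound. Taking the maximum over vertices gives $\mathrm{err}_2(A_{\bar k},A)\le\varepsilon$, and (ii) converts this into the Hausdorff bound. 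The only mildly delicate point in the whole proof is the vertex-attainment step in (ii), which needs compactness of $A_{\bar k}$ and convexity of $d_2(\cdot,A)$; everything else is bookkeeping.
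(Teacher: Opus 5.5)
Your proposal is correct and follows the paper's proof essentially step for step. Part (i) is the induction via $A_{k+1}=A_k\cap H_{k+1}$ with $A\subseteq H_{k+1}$; part (ii) reduces the Hausdorff distance to the maximal vertex distance for a compact polytope containing $A$; and part (iii) applies Lemma~\ref{lem:zk_proxy} vertexwise. The only difference is that you spell out the convexity-of-$d_2(\cdot,A)$ argument behind the vertex-attainment identity, which the paper simply asserts (and which is in any case built into its definition of $\mathrm{err}_2$).
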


\begin{proof}
(i) By assumption, $A_{0}\supseteq A$. If $A_{k}\supseteq A$ and $H_{k+1}\supseteq A$, then $A_{k+1}=A_{k}\cap H_{k+1}$ still contains $A$ and is clearly a subset of $A_{k}$.  (ii) \rev{Since $A_{\bar k}$ is a compact polytope containing $A$}, the Hausdorff error equals the maximum vertex distance, hence the condition $\mathrm{err}_2(A_{\bar k},A)\le \varepsilon$ is equivalent to $\delta_H(A_{\bar k},A;\|\cdot\|_2)\le \varepsilon$. (iii) For each vertex $v$, Lemma~\ref{lem:zk_proxy} gives $d_2(v,A)\le \frac{1}{\sqrt{\varepsilon_0}}\|z^v\|_{M_{\bar k}}$. Taking the maximum over $v\in \operatorname{ext}(A_{\bar k})$ yields $\mathrm{err}_2(A_{\bar k},A)\le \varepsilon$.
\end{proof}


The following lemma is the key mechanism ensuring progress: whenever the selected vertex lies outside $A$, the generated cut removes it from the outer approximation.

\begin{lemma}[The generated cut removes the selected vertex]\label{lem:cut_removes_vertex}
Fix $M\succ0$ and $v\in\mathbb{R}^q$, and consider the norm-minimization scalarization \eqref{P-v}. Let
$A:=P\cap S(\gamma)$ with $P=\Gamma(X)+C$.
Let $(x^\star,z^\star)$ be an optimal solution and set $y^\star:=v+z^\star\in A$.
Assume there exists a dual optimal pair $(w^\star,\lambda^\star)$ with $w^\star\in C^+$, $\lambda^\star\ge0$, and define the combined normal
\[
g^\star := w^\star-\lambda^\star \bar w,
\]
where $\bar w\in\operatorname{int} C^+$ is the fixed slice direction introduced in Section~\ref{sec:preliminaries}.
Then:
\begin{enumerate}
\item[(i)] (Supporting halfspace for the slice) The halfspace $H^\star:=\{y:\ (g^\star)^\top y\ge (g^\star)^\top y^\star\}$ contains $A$; equivalently, $(g^\star)^\top y \ge (g^\star)^\top y^\star$ for all $y\in A$.

\item[(ii)] (Strict separation of $v$ unless $v\in A$) If $z^\star\neq 0$ (equivalently, $v\notin A$), then
\[
(g^\star)^\top v \ <\ (g^\star)^\top y^\star,
\]
hence $v\notin H^\star$, so intersecting the current outer approximation with $H^\star$ removes the vertex $v$.
\end{enumerate}
\end{lemma}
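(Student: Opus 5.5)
The plan is to work entirely with the saddle-point (KKT) structure of the primal--dual pair, as in the proof of Proposition~\ref{prop:dual_support}(ii). Strong duality holds by Slater: take $x_0\in X$ and $z$ large in a direction of $\operatorname{int} C$ so that $\Gamma(x_0)-z-v\in-\operatorname{int}C$, and note $\bar w^\top(v+z)<\gamma$ is achievable because $\Gamma(X)\subset\operatorname{int}S(\gamma)$. Then $(x^\star,z^\star;w^\star,\lambda^\star)$ is a saddle point of $L$, which gives three facts:
\begin{itemize}
\item $x^\star$ minimizes $w^{\star\top}\Gamma(\cdot)$ over $X$;
\item $z^\star$ minimizes $\|z\|_M-(g^\star)^\top z$ over $\mathbb{R}^q$;
\item complementary slackness holds: $w^{\star\top}c^\star=0$, where $y^\star=\Gamma(x^\star)+c^\star$, and $\lambda^\star(\bar w^\top y^\star-\gamma)=0$.
\end{itemize}

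For (i), I would take $y\in A$, write $y=\Gamma(x)+c$ with $\bar w^\top y\le\gamma$, and estimate
\[
(g^\star)^\top y = w^{\star\top}y-\lambda^\star\bar w^\top y \;\ge\; w^{\star\top}\Gamma(x^\star)-\lambda^\star\gamma .
\]
The first term is bounded using $w^{\star\top}c\ge0$ and the minimality of $x^\star$. The second uses $\lambda^\star\ge0$ and $\bar w^\top y\le\gamma$. The right-hand side equals $w^{\star\top}y^\star-\lambda^\star\bar w^\top y^\star=(g^\star)^\top y^\star$ by the two complementary slackness identities.

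For (ii), I would use the $z$-stationarity. The infimum of $\|z\|_M-(g^\star)^\top z$ is finite, so by the Fenchel conjugate computation in Proposition~\ref{prop:dual_support}(i) we have $\|g^\star\|_{M^{-1}}\le1$ and the minimum value is $0$. Hence $(g^\star)^\top z^\star=\|z^\star\|_M$. Then
\[
(g^\star)^\top y^\star-(g^\star)^\top v=(g^\star)^\top z^\star=\|z^\star\|_M>0
\]
whenever $z^\star\ne0$, which is exactly the strict separation. This also yields the identity $g^\star=Mz^\star/\|z^\star\|_M$ quoted earlier: equality in Cauchy--Schwarz for the $M$-inner product, $(g^\star)^\top z\le\|g^\star\|_{M^{-1}}\|z\|_M$, forces $g^\star$ parallel to $Mz^\star$.

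For the equivalence $z^\star\ne0\iff v\notin A$, Lemma~\ref{lem:zk_proxy} gives $\|z^\star\|_M=d_M(v,A)$. Since $A$ is closed, this distance vanishes exactly when $v\in A$.

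The main obstacle is justifying the saddle-point and complementary slackness properties rigorously. This needs strong duality together with attainment of both the primal and the dual. I would secure it via the Slater point above, using that $C$ is solid and that $\gamma$ was chosen with $\Gamma(X)\subset\operatorname{int}S(\gamma)$. Once these properties are in place, the rest is direct computation.
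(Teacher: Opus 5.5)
Your proof of (i) is the paper's argument: $x^\star$ minimizes $w^{\star\top}\Gamma(\cdot)$ over $X$ by the saddle-point property, you use the sign facts $w^{\star\top}c\ge0$ and $\lambda^\star(\bar w^\top y-\gamma)\le0$, and both complementary slackness identities turn the right-hand side into $(g^\star)^\top y^\star$.

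One step needs correcting: the Slater point as you describe it does not exist. If $z=t c_0$ with $c_0\in\operatorname{int}C$ and $t$ large, then $\bar w^\top(v+z)=\bar w^\top v+t\,\bar w^\top c_0\to+\infty$. This is because $\bar w\in\operatorname{int}C^+$ forces $\bar w^\top c_0>0$. So the slice constraint fails, and Slater needs both strict inequalities at the same point. The shift into $\operatorname{int}C$ must be small. Take $z^\circ:=\Gamma(x_0)+c^\circ-v$ with $c^\circ\in\operatorname{int}C$ small enough that $\bar w^\top(\Gamma(x_0)+c^\circ)<\gamma$; this is possible because $\bar w^\top\Gamma(x_0)<\gamma$. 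Then $\Gamma(x_0)-z^\circ-v=-c^\circ\in-\operatorname{int}C$ and $\bar w^\top(v+z^\circ)<\gamma$. This is essentially the Slater point the paper uses. With it, zero duality gap and the assumed primal and dual optimal solutions give the saddle point and both complementary slackness identities, as you state.

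For (ii) your route differs mildly from the paper's. The paper reads $z$-stationarity as $g^\star\in\partial\|z^\star\|_M$ and uses differentiability of $\|\cdot\|_M$ at $z^\star\neq0$ to get $g^\star=Mz^\star/\|z^\star\|_M$. Only then does it compute $(g^\star)^\top z^\star=\|z^\star\|_M$. You instead note that the $z$-part of the Lagrangian attains its infimum, which by the conjugate computation in Proposition~\ref{prop:dual_support}(i) must be $0$. This gives $(g^\star)^\top z^\star=\|z^\star\|_M$ with no differentiability, and the explicit normal then follows from equality in Cauchy--Schwarz.

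What each buys: your version proves strict separation for an arbitrary norm, including nonsmooth ones such as $\ell_1$ or $\ell_\infty$. The paper's version delivers in one line the formula $g^\star=Mz^\star/\|z^\star\|_M$ that is used later: for nonvanishing of the cut normal, the hyperplane-distance estimate in Theorem~\ref{thm:euclid_convergence_adaptive}, and the cut depth $h_k$ in Theorem~\ref{thm:rate}. Since you recover that formula as well, nothing is lost.

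Your justification of $z^\star\neq0\iff v\notin A$ via Lemma~\ref{lem:zk_proxy} fills in a point the paper leaves implicit. A more direct argument: if $v\in A$ then $z=0$ is feasible, and if $z^\star=0$ then $v=y^\star\in A$.
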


\begin{proof}
(i) Let $y\in A$. Then $y=\Gamma(x)+c$ for some $x\in X$ and $c\in C$, and $\bar w^\top y\le\gamma$.
Since $w^\star\in C^+$, we have $(w^\star)^\top c\ge0$, hence $(w^\star)^\top y\ge (w^\star)^\top \Gamma(x)$.
Also, $\lambda^\star\ge0$ and $\bar w^\top y\le\gamma$ imply $\lambda^\star(\bar w^\top y-\gamma)\le 0$.
Therefore, for all $y\in A$,
\[
(w^\star)^\top y - \lambda^\star \bar w^\top y
\ \ge\
(w^\star)^\top \Gamma(x) - \lambda^\star \gamma.
\]
By strong duality \rev{(the scalarization is convex and satisfies Slater's condition: pick $x^\circ\in\operatorname{int}X$---nonempty by Assumption~\ref{ass:standing}---and $c^\circ\in\operatorname{int}C$ small enough that $\bar w^\top(\Gamma(x^\circ)+c^\circ)<\gamma$, possible because $\Gamma(X)\subseteq\operatorname{int}S(\gamma)$ by the slice construction of Section~\ref{sec:preliminaries}; then $z^\circ:=\Gamma(x^\circ)+c^\circ-v$ gives $\Gamma(x^\circ)-z^\circ-v=-c^\circ\in-\operatorname{int}C$ and $\bar w^\top(v+z^\circ)<\gamma$, so $(x^\circ,z^\circ)$ is strictly feasible)} and optimality of $(x^\star,z^\star)$ and $(w^\star,\lambda^\star)$, the standard saddle-point inequality yields
\[
(w^\star)^\top \Gamma(x) - \lambda^\star \gamma
\ \ge\
(w^\star)^\top \Gamma(x^\star) - \lambda^\star \gamma.
\]
\rev{By complementary slackness at the primal--dual optimum, $(w^\star)^\top c^\star=0$ and $\lambda^\star(\bar w^\top y^\star-\gamma)=0$, where $c^\star\in C$ is given by $y^\star=v+z^\star=\Gamma(x^\star)+c^\star$; hence $(g^\star)^\top y^\star=(w^\star)^\top\Gamma(x^\star)-\lambda^\star\gamma$.} Combining \rev{this with the two displays above, }we obtain
\[
(w^\star-\lambda^\star \bar w)^\top y \ \ge\ (w^\star-\lambda^\star \bar w)^\top y^\star,
\]
which proves~(i). (ii) Since $z$ is unconstrained, KKT stationarity in $z$ gives
\[
0 \in \partial \|z^\star\|_M \ +\ \{-w^\star+\lambda^\star \bar w\},
\quad\text{i.e.,}\quad
g^\star \in \partial \|z^\star\|_M.
\]
For $z^\star\neq 0$, the norm $\|z\|_M$ is differentiable and
\[
\partial \|z^\star\|_M = \left\{\frac{Mz^\star}{\|z^\star\|_M}\right\},
\]
hence $g^\star = Mz^\star/\|z^\star\|_M$ and therefore
\[
(g^\star)^\top z^\star
=
\frac{(z^\star)^\top M z^\star}{\|z^\star\|_M}
=
\|z^\star\|_M
>
0.
\]
Finally, since $y^\star=v+z^\star$,
\[
(g^\star)^\top y^\star
=
(g^\star)^\top v + (g^\star)^\top z^\star
=
(g^\star)^\top v + \|z^\star\|_M
>
(g^\star)^\top v,
\]
which proves strict separation and $v\notin H^\star$.
\end{proof}

We begin with a qualitative convergence guarantee, before turning to convergence \emph{rates}. The outer approximations $A_k$ converge to the slice $A$ in Euclidean Hausdorff distance. This requires only the uniform spectral bounds on $M_k$ (Lemma~\ref{lem:spec_bounds}) and the cut-removal property (Lemma~\ref{lem:cut_removes_vertex}), and is independent of \rev{any dispersion, curvature, or smoothness assumption}.

\begin{theorem}[Euclidean convergence under adaptive metrics]\label{thm:euclid_convergence_adaptive}
Let $A:=P\cap S(\gamma)$ be the compact slice of the upper image $P=\Gamma(X)+C$; \rev{the outer approximations $A_k$ are then compact polytopes containing $A$, their recession cone being $\{0\}$ because the slice direction satisfies $\bar w\in\operatorname{int}C^+$ (so $\bar w^\top d>0$ for all $d\in C\setminus\{0\}$).}
Assume that at each nonterminal iteration $k$ (i.e.\ $z^{v^k}\neq0$) the iteration produces \rev{the halfspace generated by the scalarization at the selected vertex $v^k$ (defined below), namely $H_{k+1}=\{y:(g^k)^\top y\ge(g^k)^\top y^k\}$ with combined cut-normal $g^k=M_k z^{v^k}/\|z^{v^k}\|_{M_k}$ (equivalently, $g^k=g_{k+1}$ in the subscript notation of the algorithm above) and boundary point $y^k=v^k+z^{v^k}$, so that $A\subseteq H_{k+1}$ (Lemma~\ref{lem:cut_removes_vertex})} and updates $A_{k+1}=A_{k}\cap H_{k+1}$.
Assume further that the scalarizations $(\mathrm P_{M_k}(v))$ are solved exactly---each returning a displacement $z^v$ that attains the true distance, $\|z^v\|_{M_k}=d_{M_k}(v,A)$, with no early termination---and that the matrices $M_k\succ0$ satisfy the uniform spectral bounds
\begin{equation}\label{eq:uniform_bounds_main}
m I \preceq M_k \preceq M I \qquad \forall k\ge 0
\end{equation}
for some constants $0<m\le M<\infty$.
At each iteration, select
\[
v^k \in \arg\max\{\ \|z^v\|_{M_k}\ :\ v\in \operatorname{ext}(A_{k})\ \},
\]
where $z^v$ denotes the optimal displacement, i.e., the $z$-part of the optimal solution of $(\mathrm P_{M_k}(v))$.

Then
\[
\delta_H(A_{k},A;\|\cdot\|_2)\to 0
\quad\text{as }k\to\infty.
\]
Equivalently,
\[
\max_{v\in \operatorname{ext}(A_{k})} d_2(v,A)\to 0.
\]
\end{theorem}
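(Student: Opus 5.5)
We argue by contradiction, using a standard ``cut-depth'' compactness argument adapted to the uniform spectral bounds~\eqref{eq:uniform_bounds_main}. Set $E_k:=\max_{v\in\operatorname{ext}(A_k)} d_2(v,A)$. By Theorem~\ref{thm:baseline_correctness_slice}(i) the sets $A_k$ are nested and contain $A$, so $E_k=\delta_H(A_k,A;\|\cdot\|_2)$ is nonincreasing. Hence $E_k\to E_\infty\ge0$, and it suffices to rule out $E_\infty=\eta>0$. If the algorithm terminates finitely there is nothing to prove, so we assume every iteration is nonterminal.

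\textbf{Step 1 (the selected vertex is deep).} Let $w_k\in\operatorname{ext}(A_k)$ attain $E_k$. Lemma~\ref{lem:zk_proxy} gives $d_{M_k}(v,A)=\|z^v\|_{M_k}$, and~\eqref{eq:uniform_bounds_main} gives $\sqrt m\,d_2\le d_{M_k}\le\sqrt M\,d_2$. Combining these with the selection rule yields
\[
\|z^{v^k}\|_{M_k}\ \ge\ \|z^{w_k}\|_{M_k}\ \ge\ \sqrt m\,E_k\ \ge\ \sqrt m\,\eta .
\]

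\textbf{Step 2 (the cut has definite depth, and every later iterate is far from $v^k$).} By Lemma~\ref{lem:cut_removes_vertex}(ii), $g^k=M_kz^{v^k}/\|z^{v^k}\|_{M_k}$ and $(g^k)^\top(y-v^k)\ge(g^k)^\top z^{v^k}=\|z^{v^k}\|_{M_k}$ for all $y\in H_{k+1}$. The normal is bounded: $\|g^k\|_2=\|M_k^{1/2}(M_k^{1/2}z)\|_2/\|z\|_{M_k}\le\sqrt M$. Every point $y\in A_{j}$ with $j>k$ lies in $H_{k+1}$, so
\[
\|y-v^k\|_2\ \ge\ \frac{\|z^{v^k}\|_{M_k}}{\|g^k\|_2}\ \ge\ \sqrt{m/M}\;\eta=:\rho>0 .
\]
In particular, for $j>k$ we have $\|v^j-v^k\|_2\ge\rho$, since $v^j\in\operatorname{ext}(A_j)\subseteq A_j$.

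\textbf{Step 3 (compactness contradiction).} All $v^k$ lie in the compact polytope $A_0$. Step 2 shows that $\{v^k\}$ is a $\rho$-separated sequence there, so it has no convergent subsequence. This contradicts Bolzano--Weierstrass, and therefore $\eta=0$. Equivalently, a volume/packing count bounds the number of such iterations by $\operatorname{vol}(A_0+\tfrac\rho2 B)/\operatorname{vol}(\tfrac\rho2B)$. The equivalence of the two displayed conclusions follows from Theorem~\ref{thm:baseline_correctness_slice}(ii), since for compact polytopes containing $A$ the Hausdorff distance is attained at vertices.

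The step needing the most care is Step 2: the separation must hold uniformly even though the metric changes between iterations. This is exactly where~\eqref{eq:uniform_bounds_main} enters, bounding $\|g^k\|_2$ above and the cut depth below independently of $k$. The argument also uses that the cut at $v^k$ was computed with $M_k$ while later vertices are compared in the Euclidean norm, so only Euclidean quantities may appear in the separation inequality. The compactness of $A_0$ itself comes from $\bar w\in\operatorname{int}C^+$, as noted in the statement.
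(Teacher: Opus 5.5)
Your proof is correct and takes essentially the same route as the paper. Both arguments derive a $k$-independent lower bound on the Euclidean depth of each cut below $v^k$ from the uniform spectral bounds, then reach a compactness contradiction in $A_0$; your $\rho$-separated-sequence phrasing is equivalent to the paper's choice of two nearby subsequence terms. Your depth constant $\sqrt{m/M}\,\eta$ is in fact slightly sharper than the paper's $(m/M)^{3/2}\ell$, because you bound $\|M_kz\|_2\le\sqrt M\,\|z\|_{M_k}$ directly instead of passing through $\|z\|_2$.
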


\begin{proof}
Throughout this proof, $\|\cdot\|$ (without subscript) denotes the Euclidean norm $\|\cdot\|_2$. By construction, $A\subseteq A_{0}$ and $A\subseteq H_{k+1}$ for all $k$, so $A\subseteq A_{k}$, and the sequence is nested and decreasing. For any vertex $v$ of $A_{k}$, let $d_k(v):=d_{M_k}(v,A)=\|z^v\|_{M_k}$ (distance identity from Lemma~\ref{lem:zk_proxy}). From \eqref{eq:uniform_bounds_main},
\begin{equation*}
\sqrt{m}\, d_2(v,A) \le d_k(v)\le \sqrt{M}\, d_2(v,A).
\end{equation*}
Let $E_k:=\delta_H(A_{k},A;\|\cdot\|_2)=\max_{v\in \operatorname{ext}(A_{k})} d_2(v,A)$ be the Euclidean Hausdorff error.
Since $v^k$ maximizes $d_k(\cdot)$ over vertices, $d_k(v^k)\ge \sqrt{m}\,E_k$, and hence
\[
d_2(v^k,A)\ge \frac{1}{\sqrt{M}}d_k(v^k)\ge \sqrt{\frac{m}{M}}\,E_k =: \theta_{\min}\, E_k.
\]
When $E_k>0$, we have $v^k\notin A$ and $\|z^{v^k}\|_{M_k}>0$. By Lemma~\ref{lem:cut_removes_vertex}(ii), $v^k\notin H_{k+1}$, so the update $A_{k+1}=A_{k}\cap H_{k+1}$ removes $v^k$.

It remains to show $E_k\to 0$ as $k\to\infty$. Since $A_{k+1}\subseteq A_{k}$, the sequence $E_k$ is non-increasing and bounded below by zero, hence converges to some $\ell\ge 0$. Suppose for contradiction that $\ell>0$. Then $E_k\ge \ell$ for all $k$, and therefore $d_2(v^k,A)\ge \theta_{\min} \ell$ and $\|z^{v^k}\|_{M_k}\ge \sqrt{m}\,\theta_{\min} \ell$ for all $k$. By the KKT stationarity condition (Lemma~\ref{lem:cut_removes_vertex}, part~(ii)), the cut-normal satisfies $g^k = M_k z^{v^k}/\|z^{v^k}\|_{M_k}$, and the Euclidean distance from $v^k$ to the cutting hyperplane $\partial H_{k+1}$ is
\[
d_2(v^k, \partial H_{k+1})
= \frac{\|z^{v^k}\|_{M_k}^2}{\|M_k z^{v^k}\|}
\ge \frac{m\,\|z^{v^k}\|^2}{M\,\|z^{v^k}\|}
= \frac{m}{M}\,\|z^{v^k}\|
\ge \frac{m}{M}\, d_2(v^k,A)
\ge \frac{m}{M}\,\theta_{\min} \ell
=: \delta > 0,
\]
using $\|z\|_{M_k}^2 \ge m\|z\|^2$, $\|M_k z\|\le M\|z\|$, and $\|z^{v^k}\| = \|y^k - v^k\| \ge d_2(v^k,A)$.

Since all $v^k$ lie in $A_{0}$ (compact), there exists a convergent subsequence $v^{k_j}\to v^*$. Choose $j_1 < j_2$ with $\|v^{k_{j_1}} - v^{k_{j_2}}\| < \delta/2$. The signed distance from $v^{k_{j_2}}$ to $\partial H_{k_{j_1}+1}$ satisfies
\[
\frac{\langle g^{k_{j_1}},\, v^{k_{j_2}} - y^{k_{j_1}}\rangle}{\|g^{k_{j_1}}\|}
\;\le\;
\frac{\langle g^{k_{j_1}},\, v^{k_{j_1}} - y^{k_{j_1}}\rangle}{\|g^{k_{j_1}}\|}
+ \|v^{k_{j_2}} - v^{k_{j_1}}\|
\;<\; -\delta + \tfrac{\delta}{2}
= -\tfrac{\delta}{2}
< 0,
\]
so $v^{k_{j_2}}\notin H_{k_{j_1}+1}$. But $v^{k_{j_2}}\in A_{k_{j_2}} \subseteq A_{k_{j_1}+1} \subseteq H_{k_{j_1}+1}$, a contradiction. Hence $\ell=0$.

For compact convex sets, $\delta_H(A_{k},A;\|\cdot\|_2) = \max_{v\in \operatorname{ext}(A_{k})} d_2(v,A)=E_k$, yielding the claim.
\end{proof}

\subsection[Metric Quality]{Metric Quality}

\rev{The constant in the convergence rate of the next subsection is governed by the conditioning of the adaptive metric, which we summarize in a single parameter.}

\begin{definition}[Metric quality parameter]\label{def:theta_k}
For $M_k\succ 0$, define the \emph{metric quality parameter}
\[
\theta_k := \sqrt{\frac{\lambda_{\min}(M_k)}{\lambda_{\max}(M_k)}} \in (0,1].
\]
Note that $\theta_k = 1/\kappa(M_k)^{1/2}$, where $\kappa(M_k)$ is the condition number of $M_k$. Thus $\theta_k\in(0,1]$ quantifies the conditioning of the metric: values close to $1$ indicate a well-conditioned, near-Euclidean $M_k$, while small $\theta_k$ reflects a large condition number and hence a larger constant in the convergence bounds (as for the fixed-norm case discussed after Theorem~\ref{thm:inner-product-rate}).
\end{definition}

\begin{lemma}[Selection quality in Euclidean distance]\label{lem:selection_quality}
Let $A$ be compact convex and let $M_k\succ0$. For any $v\in\mathbb{R}^q$,
\[
\sqrt{\lambda_{\min}(M_k)}\, d_2(v,A)\ \le\ d_{M_k}(v,A)\ \le\ \sqrt{\lambda_{\max}(M_k)}\, d_2(v,A).
\]
If $v^k\in\arg\max_{v} d_{M_k}(v,A)$, then
\[
d_2(v^k,A)\ \ge\ \theta_k\, E_k,
\]
where $E_k := \delta_H(A_k,A;\|\cdot\|_2) = \max_{v\in\operatorname{ext}(A_k)} d_2(v,A)$ is the Euclidean Hausdorff error.

\end{lemma}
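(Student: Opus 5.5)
The plan is to prove the two-sided comparison pointwise in the displacement and then pass to infima over $A$. For every $d\in\mathbb{R}^q$ the Rayleigh quotient gives $\lambda_{\min}(M_k)\|d\|_2^2\le d^\top M_k d\le\lambda_{\max}(M_k)\|d\|_2^2$. Taking square roots yields $\sqrt{\lambda_{\min}(M_k)}\,\|d\|_2\le\|d\|_{M_k}\le\sqrt{\lambda_{\max}(M_k)}\,\|d\|_2$. This is the same estimate already used in Lemma~\ref{lem:spec_bounds}, now with the sharp eigenvalues instead of $\varepsilon_0$ and $\varepsilon_0+1$. We apply it with $d=v-y$ for $y\in A$ and take the infimum over $y\in A$. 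Since both $\sqrt{\lambda_{\min}}$ and $\sqrt{\lambda_{\max}}$ are positive constants, they pass through the infimum, which gives $\sqrt{\lambda_{\min}(M_k)}\,d_2(v,A)\le d_{M_k}(v,A)\le\sqrt{\lambda_{\max}(M_k)}\,d_2(v,A)$. Compactness of $A$ is not even needed here; it only guarantees that both infima are attained.

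For the selection bound, the maximum in the definition of $v^k$ is read as ranging over $\operatorname{ext}(A_k)$, as in the selection rule \eqref{eq:vertex_rule}. By Lemma~\ref{lem:zk_proxy} this is the same as maximizing $\|z^v\|_{M_k}$. Next pick a vertex $\hat v\in\operatorname{ext}(A_k)$ with $d_2(\hat v,A)=E_k$. Such a vertex exists because $A_k$ is a compact polytope with finitely many vertices, and $E_k$ is the maximum of the vertex distances by definition. The chain of inequalities is then
\[
\sqrt{\lambda_{\max}(M_k)}\,d_2(v^k,A)\;\ge\; d_{M_k}(v^k,A)\;\ge\; d_{M_k}(\hat v,A)\;\ge\;\sqrt{\lambda_{\min}(M_k)}\,E_k .
\]
The first and last steps use the two-sided comparison just proved, and the middle step uses maximality of $v^k$. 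Dividing by $\sqrt{\lambda_{\max}(M_k)}$ gives $d_2(v^k,A)\ge\theta_k E_k$, with $\theta_k$ as in Definition~\ref{def:theta_k}.

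The only delicate point is the scope of the maximization. If $v^k$ were allowed to maximize over all of $\mathbb{R}^q$, the maximum would be infinite and the statement would be meaningless. The proof therefore takes the candidate set to be $\operatorname{ext}(A_k)$ and uses the identity $E_k=\max_{v\in\operatorname{ext}(A_k)}d_2(v,A)$ from part~(ii) of Theorem~\ref{thm:baseline_correctness_slice}. The same argument works verbatim for any candidate set on which the Euclidean error $E_k$ is attained. Beyond this, the argument mirrors the $\theta_{\min}$ estimate in the proof of Theorem~\ref{thm:euclid_convergence_adaptive}, with $m$ and $M$ replaced by the iteration-specific eigenvalues $\lambda_{\min}(M_k)$ and $\lambda_{\max}(M_k)$.
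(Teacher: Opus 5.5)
Your proposal is correct and follows essentially the same route as the paper: Rayleigh-quotient bounds passed through the infimum over $A$, then the chain $\sqrt{\lambda_{\max}(M_k)}\,d_2(v^k,A)\ge d_{M_k}(v^k,A)\ge\sqrt{\lambda_{\min}(M_k)}\,E_k$, which uses maximality of $v^k$. Your explicit restriction of the maximization to $\operatorname{ext}(A_k)$, with a vertex $\hat v$ attaining $E_k$, just makes precise what the paper's unqualified $\max_v$ leaves implicit.
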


\begin{proof}
The first inequalities follow from Rayleigh quotient bounds: $\lambda_{\min}(M_k)\|d\|_2^2 \le d^\top M_k d \le \lambda_{\max}(M_k)\|d\|_2^2$.
Taking infimum over $y\in A$ yields the distance bounds. If $v^k$ maximizes $d_{M_k}(\cdot,A)$, then
\[
d_{M_k}(v^k,A) \ge \max_v d_{M_k}(v,A) \ge \sqrt{\lambda_{\min}(M_k)}\,E_k,
\]
and also $d_{M_k}(v^k,A)\le \sqrt{\lambda_{\max}(M_k)}\, d_2(v^k,A)$, giving $d_2(v^k,A)\ge \theta_k E_k$.
\end{proof}

\begin{revblock}
\begin{remark}[Empirical dispersion of the cut-normals]\label{rem:dispersion}
The metric quality $\theta_k$ is controlled by how well the cut-normals spread across directions. Writing $\Sigma_k:=\frac1k\sum_{i=1}^k u_iu_i^\top$ for the empirical second-moment matrix of the normalized cut-normals $u_i$ (so that $M_k=\varepsilon_0 I+\Sigma_k$), we have $\theta_k\ge\sqrt{(\varepsilon_0+\lambda_{\min}(\Sigma_k))/(\varepsilon_0+1)}$; a floor $\lambda_{\min}(\Sigma_k)\ge\eta>0$ therefore improves the metric quality to $\theta_k\ge\sqrt{(\varepsilon_0+\eta)/(\varepsilon_0+1)}$. A positive smallest eigenvalue $\lambda_{\min}(\Sigma_k)$ is observed \emph{at termination} in the four main $q\le3$ runs (Section~\ref{sec:experiments}), where it serves as a convenient diagnostic of metric conditioning; we do not claim a uniform-in-$k$ floor $\eta$ (indeed $\Sigma_1=u_1u_1^\top$ is rank one, so $\lambda_{\min}(\Sigma_1)=0$). We do not require it: Theorem~\ref{thm:rate} establishes the improved rate with no dispersion hypothesis. Whether optimal-rate convergence in turn \emph{forces} the cut-normals to disperse is an interesting converse question, which we leave open.
\end{remark}
\end{revblock}

\subsection{Convergence Rate}

For context we first recall the $H(r,A)$-sequence framework of~\cite{kamenev1992class,lotov2004interactive}, which yields the generic rate $O(k^{1/(1-q)})$; Theorem~\ref{thm:rate} then improves on this exponent by adapting the Euclidean packing argument of \cite{ararat_convergence_2024} directly to the iteration-dependent metric, rather than through this framework.

\begin{definition}[$H(r,A)$-sequence of cutting]\label{def:H_sequence}
Let $A\subset\mathbb{R}^q$ be a nonempty convex compact set and let $r>0$. A sequence $(A_k)_{k\ge 0}$ of polytopes in $\mathbb{R}^q$ is called an \emph{$H(r,A)$-sequence of cutting} if:
\begin{enumerate}
\item $A_0 = \bigcap_{i=1}^{I} \mathcal{H}(\omega_i, A)$ for some finite collection $\omega_1,\ldots,\omega_I \in \mathbb{R}^q\setminus\{0\}$;
\item $A_k \supseteq A$ for all $k\ge 0$;
\item $A_{k+1} = A_k \cap \mathcal{H}(w_k, A)$ for some $w_k \in \mathbb{R}^q\setminus\{0\}$;
\item $\delta_H(A_k, A_{k+1}) \ge r\cdot \delta_H(A_k, A)$ for all $k\ge 0$.
\end{enumerate}
The first condition ensures that $A_0$ is a polytope containing $A$, while the fourth condition ensures that each cut removes at least a fraction $r$ of the current approximation error. For such sequences, the general rate theorem~\cite{kamenev1992class,lotov2004interactive} establishes that $\delta_H(A_k,A) = O(k^{1/(1-q)})$.
\end{definition}

\begin{revblock}
\begin{theorem}[Improved convergence rate of the adaptive-metric algorithm]\label{thm:rate}
Let $A:=P\cap S(\gamma)\subset\mathbb{R}^q$ ($q\ge2$) be nonempty, convex, and compact with circumradius $R:=R(A)>0$, and let $(A_k)_{k\ge0}$ be generated by the adaptive-metric algorithm under the hypotheses of Theorem~\ref{thm:euclid_convergence_adaptive} (in particular the scalarization-generated cuts $H_{k+1}$, and the compactness of the $A_k$, established there): exact scalarizations $(\mathrm P_{M_k}(v))$ (each returning $z^v$ with $\|z^v\|_{M_k}=d_{M_k}(v,A)$), uniform spectral bounds $mI\preceq M_k\preceq MI$ ($0<m\le M<\infty$; by Lemma~\ref{lem:spec_bounds} one may take $m=\varepsilon_0$, $M=\varepsilon_0+1$), and the farthest-vertex rule~\eqref{eq:vertex_rule}. Write $E_k:=\delta_H(A_k,A;\|\cdot\|_2)$ and $\theta:=\sqrt{m/M}$. Then either the algorithm terminates with $A_{\bar k}=A$ after finitely many steps, or for every $\varepsilon>0$ there exists $K\in\mathbb{N}$ such that
\[
E_k\ \le\ (1+\varepsilon)\,\frac{1}{\theta}\,\bar\lambda(A)\,k^{\frac{2}{1-q}}
\quad\text{for all }k\ge K,
\qquad
\bar\lambda(A):=16R\Bigl(\frac{q\pi_q}{\pi_{q-1}}\Bigr)^{\frac{2}{q-1}},
\]
where $\pi_q$ is the volume of the Euclidean unit ball in $\mathbb{R}^q$. In particular $E_k=O(k^{2/(1-q)})$---the improved exponent, previously established only for fixed inner-product norms (Theorem~\ref{thm:inner-product-rate})---with no strict-convexity or smoothness hypothesis on $A$.
\end{theorem}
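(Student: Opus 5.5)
The plan is to run the packing argument of \cite[Thm.~7.2]{ararat_convergence_2024} directly in Euclidean coordinates. The moving metric $M_k$ enters only through two inequalities that hold uniformly in $k$: the selection inequality of Lemma~\ref{lem:selection_quality}, which gives $d_2(v^k,A)\ge\theta E_k$, and the cut-depth inequality already used in the proof of Theorem~\ref{thm:euclid_convergence_adaptive}. If some $E_{\bar k}=0$, then $A_{\bar k}=A$ and we are in the finite-termination alternative. So assume $E_k>0$ for all $k$. Then $E_k\downarrow0$ by Theorem~\ref{thm:euclid_convergence_adaptive}.

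\textbf{Step 1 (what each cut does).} Let $y^k=v^k+z^{v^k}$ be the boundary point and $g^k=M_kz^{v^k}/\|z^{v^k}\|_{M_k}$ the cut-normal. The halfspace $H_{k+1}$ supports $A$ at $y^k$ (Lemma~\ref{lem:cut_removes_vertex}). Every later vertex $v^j$, $j>k$, lies in $A_j\subseteq H_{k+1}$.

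The Euclidean ingredient to recover is this: a later vertex whose nearest point on $A$ lies close to $y^k$, measured in outward-normal direction, must itself be close to $A$. In the Euclidean case $g^k$ is parallel to $v^k-y^k$. Here the normal $g^k$ can differ from $z^{v^k}$, but only by a uniformly bounded angle, since
\[
\cos\angle(g^k,z^{v^k})=\frac{\|z\|_{M_k}^2}{\|M_kz\|_2\|z\|_2}\ge \frac{m}{M}.
\]
I will not need the angle itself. I will record instead the two facts the packing argument uses: $v^k\notin H_{k+1}$ with Euclidean depth $\ge (m/M)\|z^{v^k}\|_2\ge \theta^2 d_2(v^k,A)\ge\theta^3E_k$, and $y^k\in\operatorname{bd}A$ with outward unit normal $u_{k+1}=g^k/\|g^k\|_2$.

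\textbf{Step 2 (packing).} Consider the normals $u_{k+1}$ attached to the points $y^k$ for $k$ in a dyadic block $K/2\le k\le K$. During this block $E_k\ge E_K$. I want to show that the pairs $(y^k,u_{k+1})$ are $\rho$-separated, with $\rho\approx\sqrt{E_K/R}$ up to $\theta$-factors, in the sense used in \cite{ararat_convergence_2024}: the caps of the outer parallel body $A+E_KB$ cut off by the hyperplanes at the $v^k$ are disjoint. Disjointness should follow from the fact that $v^j$ for $j>k$ lies in $H_{k+1}$ while $v^k\notin H_{k+1}$ by a margin. Then I will compare surface area: the area of $\operatorname{bd}(A+E_KB)$ is at most the area of a ball of radius $R+E_K$, and each cap has area $\gtrsim (E_K R)^{(q-1)/2}\pi_{q-1}$. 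This yields
\[
K/2\lesssim \Bigl(\tfrac{R}{\theta E_K}\Bigr)^{(q-1)/2}\cdot\frac{q\pi_q}{\pi_{q-1}},
\]
and inverting gives $E_K\le(1+\varepsilon)\theta^{-1}\bar\lambda(A)K^{2/(1-q)}$ for large $K$.

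The point I must check is that the factor $\theta$ enters only once, and linearly. This should hold if the cap height is taken to be $d_2(v^k,A)\ge\theta E_k$, measured along the true Euclidean nearest-point direction rather than along $g^k$. That choice lets me reuse the Euclidean cap geometry of \cite{ararat_convergence_2024} verbatim, with $E_k$ replaced by $\theta E_k$.

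\textbf{Main obstacle.} The hard step is the disjointness of caps when each cut's normal is $g^k$ rather than the Euclidean nearest-point normal. The Euclidean proof uses the fact that the cut hyperplane is orthogonal to $v^k-\Pi_A(v^k)$. To handle this, I will define the caps via Euclidean projections $\Pi_A(v^k)$, and use only that $v^j\in H_{k+1}$ and $A\subseteq H_{k+1}$. Then $d_2(v^j,v^k)\ge$ the depth of $v^k$ below $H_{k+1}$, which is at least $\theta^3E_k$.

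If this produces $\theta^{3}$ instead of $\theta$, the constant will be worse than stated. In that case I would try to sharpen the depth estimate by measuring the depth in the $M_k$-norm, where the cut is exactly the $M_k$-orthogonal one, and converting only once at the end.
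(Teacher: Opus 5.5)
\textbf{Verdict: genuine gap.} Step~1 is sound, but Step~2 (the packing) does not deliver the improved exponent, and the dyadic block loses the stated constant.

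Your Step~1 is sound, and the fallback you mention at the end is in substance what the paper does. With $s_k=\|z^k\|_{M_k}$, the Euclidean depth of $v^k$ below $\partial H_{k+1}$ is $h_k=s_k^2/\|M_kz^k\|_2$. Then $M_k^2\preceq MM_k$ gives $h_k\ge s_k/\sqrt M\ge\theta E_k$ directly. This avoids passing through $\|z^k\|_2$ and $d_2(v^k,A)$, which is where your extra factor $\theta^2$ comes from.

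The gap is in Step~2, which is the only place the improved exponent can come from. As written it gives at best the generic rate $O(k^{1/(1-q)})$, for three reasons.

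First, the cap-area bound fails for the body you chose. A cap of $A+E_K\overline B$ of depth of order $E_K$ has area of order $(E_KR)^{(q-1)/2}$ only where $\operatorname{bd}A$ is curved on scale $R$. Near a vertex of $A$, the boundary of $A+E_K\overline B$ is a spherical patch of radius $E_K$. Such a cap therefore has diameter $O(E_K)$ and area $O(E_K^{q-1})$. This case is not pathological: the theorem assumes no smoothness, $A$ is a polytope for linear $\Gamma$ and polyhedral $X$, and for $q=2$ the two points where $\operatorname{bd}P$ meets $\{\bar w^\top y=\gamma\}$ are always corners of $A$.

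Second, disjointness of the caps is only asserted. The one separation you actually derive, $\|v^j-v^k\|_2\ge h_k$ for $j>k$, is linear in $E_k$. Spacing of order $E_k$ is exactly the information behind the generic exponent $1/(1-q)$ and is not enough for $2/(1-q)$.

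Third, you cannot reuse the Euclidean cap geometry verbatim with height $d_2(v^k,A)$. The cut has normal $g^k$, not $v^k-\Pi_A(v^k)$. The Euclidean argument relies on the alignment $\|v-y\|_2=n^\top(y-v)$, which fails in the moving metric.

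The missing idea is to pair each support point with its cut-normal and lift it outward by a radius $\varrho>0$ that does \emph{not} depend on $k$; one takes $\varrho=R$ at the end. With $n^i:=g^i/\|g^i\|_2$, set $b^i:=y^i-\varrho n^i\in\operatorname{bd}(A+\overline B(0,\varrho))$. Note that $-n^i$, not $n^i$, is the outward normal of $A$ at $y^i$.

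For $i<k$ you have three incidences: $y^i\in H_{k+1}$ and $y^k\in H_{i+1}$ (because $A$ lies in every cut), and $v^k\in A_k\subseteq H_{i+1}$.
\begin{itemize}
\item If $(n^k)^\top n^i\le0$, expand $\|b^k-b^i\|_2^2$ and drop the two nonnegative cross terms to get $\|b^k-b^i\|_2\ge\varrho\sqrt2$.
\item Otherwise, apply \cite[Lemma~7.4]{ararat_convergence_2024} with $(y,y',w,w')=(y^i,v^k,n^i,n^k)$. This gives $\|(v^k-\varrho n^k)-b^i\|_2^2\ge\varrho\,(n^k)^\top(y^i-v^k)\ge\varrho h_k$.
\end{itemize}
The triangle inequality then moves you from $v^k$ back to $y^k$ at a cost of $\|z^k\|_2\le E_k/\theta$. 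The result is $\|b^k-b^i\|_2\ge\min\{\varrho\sqrt2,\sqrt{\varrho\theta E_k}-E_k/\theta\}$. This is a separation of order $\sqrt{E_k}$, and the correction term is of lower order as $E_k\to0$. It feeds the packing bound \cite[Lemma~7.10]{ararat_convergence_2024} on a body of circumradius at most $R+\varrho$.

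Finally, drop the dyadic block. With only about $K/2$ packed points, inverting the packing bound inflates the constant by $2^{2/(q-1)}$, so the stated constant $(1+\varepsilon)\theta^{-1}\bar\lambda(A)$ is out of reach. Since $E_k$ is nonincreasing, the separation holds uniformly for all $N\le i<j\le k$ with $N$ fixed. Then $k-N+1\sim k$ recovers the sharp constant.
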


\begin{proof}
Throughout, $\|\cdot\|:=\|\cdot\|_2$ and $\varrho>0$ is a free parameter, optimized only at the end. If $E_{\bar k}=0$ for some $\bar k$ the claim is trivial, so assume $E_k>0$ for all $k$. For each $k$ write $z^k:=z^{v^k}$, $y^k:=v^k+z^k\in A$, and $g^k:=M_kz^k/\|z^k\|_{M_k}$ for the combined cut-normal (Lemma~\ref{lem:cut_removes_vertex}), so that $H_{k+1}=\{y:(g^k)^\top y\ge(g^k)^\top y^k\}\supseteq A$ (the cut added at iteration $k$, matching Theorem~\ref{thm:euclid_convergence_adaptive}), and the update reads $A_{k+1}=A_k\cap H_{k+1}$; put $n^k:=g^k/\|g^k\|\in\mathbb{S}^{q-1}$, $s_k:=\|z^k\|_{M_k}=d_{M_k}(v^k,A)$, and $h_k:=(n^k)^\top z^k=s_k^2/\|M_kz^k\|$.

\emph{Step 1 (cut depth and displacement).} Since $v^k$ maximizes $d_{M_k}(\cdot,A)$ and $\sqrt m\,d_2(v,A)\le d_{M_k}(v,A)\le\sqrt M\,d_2(v,A)$ (Lemma~\ref{lem:selection_quality}), we have $s_k=\max_v d_{M_k}(v,A)\ge\sqrt m\,E_k$ and $s_k=d_{M_k}(v^k,A)\le\sqrt M\,E_k$, whence $\|z^k\|\le s_k/\sqrt m\le\sqrt{M/m}\,E_k=E_k/\theta$. As $M_k\preceq MI$ gives $M_k^2\preceq M M_k$, we obtain $\|M_kz^k\|^2=(z^k)^\top M_k^2z^k\le M s_k^2$, so
\[
h_k=\frac{s_k^2}{\|M_kz^k\|}\ \ge\ \frac{s_k}{\sqrt M}\ \ge\ \sqrt{\tfrac mM}\,E_k=\theta E_k.
\]

\emph{Step 2 (separation of lifted points).} Put $b^i:=y^i-\varrho\,n^i$. Since $H_{i+1}\supseteq A$ supports $A$ at $y^i\in A$ and $\|n^i\|=1$, each $b^i$ lies on $\operatorname{bd}(A+\overline B(0,\varrho))$. We claim that for every $i<k$,
\begin{equation}\label{eq:adaptive_separation}
\|b^k-b^i\|\ \ge\ \min\bigl\{\,\varrho\sqrt2,\ \sqrt{\varrho h_k}-\|z^k\|\,\bigr\}.
\end{equation}
Indeed $y^i\in A\subseteq H_{k+1}$ and $v^k\in A_k\subseteq H_{i+1}$ (as $A_k=A_0\cap H_1\cap\dots\cap H_k\subseteq H_{i+1}$ for $i<k$), so both incidences $y^i\in H_{k+1}$ and $v^k\in H_{i+1}$ hold, and $y^i$ is the $n^i$-support point of $A$. If $(n^k)^\top n^i\le0$, expanding $\|b^k-b^i\|^2$ and discarding the two nonnegative cross terms---$(n^k)^\top(y^i-y^k)\ge0$ and $(n^i)^\top(y^k-y^i)\ge0$---gives $\|b^k-b^i\|^2\ge\varrho^2\|n^k-n^i\|^2\ge2\varrho^2$. If $(n^k)^\top n^i>0$, apply \cite[Lemma~7.4]{ararat_convergence_2024} with $(y,y',w,w')=(y^i,v^k,n^i,n^k)$ to get $\|(v^k-\varrho n^k)-(y^i-\varrho n^i)\|^2\ge\varrho\,d\bigl(y^i,\operatorname{bd}H(n^k,v^k)\bigr)=\varrho\,(n^k)^\top(y^i-v^k)\ge\varrho h_k$, using $(n^k)^\top(y^i-v^k)=(n^k)^\top(y^i-y^k)+h_k\ge h_k$; combining with the triangle inequality $\|b^k-b^i\|\ge\|(v^k-\varrho n^k)-(y^i-\varrho n^i)\|-\|v^k-y^k\|$ and $\|v^k-y^k\|=\|z^k\|$ yields~\eqref{eq:adaptive_separation}. (This is the adaptive analogue of \cite[Lemma~7.5]{ararat_convergence_2024}; the Euclidean alignment used there, $\|v-y\|=n^\top(y-v)$, fails for the moving metric, so $\|z^k\|$ replaces $h_k$ in the second term.)

\emph{Step 3 (packing bound and inversion).} Fix $\rho\in(0,1)$. By Step 1, whenever $E_k\le\rho^2\varrho\theta^3$ the second term in~\eqref{eq:adaptive_separation} obeys $\sqrt{\varrho h_k}-\|z^k\|\ge\sqrt{\varrho\theta E_k}-E_k/\theta\ge(1-\rho)\sqrt{\varrho\theta E_k}$; since $E_k\to0$ (Theorem~\ref{thm:euclid_convergence_adaptive}) there is $N$ such that this holds and $(1-\rho)\sqrt{\varrho\theta E_k}<\varrho\sqrt2$ for all $k\ge N$. As $E_k$ is non-increasing, for $N\le i<j\le k$ inequality~\eqref{eq:adaptive_separation} gives $\|b^i-b^j\|\ge(1-\rho)\sqrt{\varrho\theta E_k}=:2\tau_k$, so $\{b^i\}_{N\le i\le k}$ is the base of a $\tau_k$-packing (\cite[Def.~7.8]{ararat_convergence_2024}) on $\operatorname{bd}(A+\overline B(0,\varrho))$, whose circumradius is at most $R+\varrho$. By \cite[Lemma~7.10]{ararat_convergence_2024},
\[
k-N+1\ \le\ \frac{q\pi_q}{\pi_{q-1}}\Bigl(1+\frac{4(R+\varrho)^2}{(1-\rho)^2\,\varrho\theta\,E_k}\Bigr)^{(q-1)/2}.
\]
Solving this inequality for $E_k$ gives, for all $k\ge N$,
\[
E_k\ \le\ \frac{4(R+\varrho)^2}{(1-\rho)^2\,\varrho\,\theta}\left[\Bigl(\frac{k-N+1}{q\pi_q/\pi_{q-1}}\Bigr)^{2/(q-1)}-1\right]^{-1}.
\]
Letting $k\to\infty$ and then $\rho\to0$, and finally minimizing $(R+\varrho)^2/\varrho$ over $\varrho>0$ at $\varrho=R$ (where $(R+\varrho)^2/\varrho=4R$), we obtain: for every $\varepsilon>0$ there is $K$ with $E_k\le(1+\varepsilon)\,\theta^{-1}\bar\lambda(A)\,k^{2/(1-q)}$ for all $k\ge K$, where $\bar\lambda(A)=16R\,(q\pi_q/\pi_{q-1})^{2/(q-1)}$.
\end{proof}
\end{revblock}

\begin{revblock}
\begin{remark}[Role of uniform conditioning in the proof]\label{rem:r_positive}
The positive lower bound $\theta=\sqrt{m/M}>0$ underlying Theorem~\ref{thm:rate} is precisely what the $\varepsilon_0$-regularization provides. By Lemma~\ref{lem:spec_bounds}, $\varepsilon_0 I\preceq M_k\preceq(\varepsilon_0+1)I$ for every $k$, so one may take $m=\varepsilon_0$, $M=\varepsilon_0+1$, giving
\[
\theta\ \ge\ \sqrt{\frac{\varepsilon_0}{\varepsilon_0+1}}\ >\ 0
\]
for every $k$, with no dispersion or geometric hypothesis on $A$. This regularization is not a mere numerical safeguard; it is what the proof uses: the packing separation in Step~2 requires a $k$-independent positive lower bound on $\theta$ (equivalently, a uniform upper bound on $\kappa(M_k)$), and the $\varepsilon_0$-regularization supplies it. Whether the improved exponent can persist for an \emph{unregularized} moving metric whose condition number $\kappa(M_k)$ is unbounded is a question our argument leaves open. The fixed $\varepsilon_0$-regularization is one simple mechanism that supplies the required uniform conditioning---indeed it caps $\kappa(M_k)\le(\varepsilon_0+1)/\varepsilon_0$---and thereby secures the improved rate. The constant $\theta^{-1}\bar\lambda(A)=\sqrt{(\varepsilon_0+1)/\varepsilon_0}\,\bar\lambda(A)$ degrades gracefully as $\varepsilon_0\to0$, quantifying the trade-off between adaptivity (small $\varepsilon_0$) and conditioning (large $\varepsilon_0$).
\end{remark}
\end{revblock}


\section{Numerical Experiments}\label{sec:experiments}

\subsection{Test Problems}

We consider three test problems from the literature.

\paragraph{Example 1 (Ball).}
From \cite{ararat_convergence_2024}, the problem
\begin{equation*}
\min_{x \in X} x \quad \text{w.r.t.} \leq_{\mathbb{R}^q_+}, \qquad
X = \{x \in \mathbb{R}^q : \|x - e\|_2 \leq 1\},
\end{equation*}
where $e = (1,\ldots,1)^\top$. The (weakly) efficient frontier is the portion of the unit sphere centered at $e$ that lies in $\{e\}-\mathbb{R}^q_+$ (equivalently, the part of $\partial X$ whose outer normals lie in $-\mathbb{R}^q_+$). We test $q \in \{2, 3, 4\}$.

\paragraph{Example 2.}
From \cite{ararat_convergence_2024}, the problem
\begin{equation*}
\min_{x \in X} \begin{pmatrix} \|x - a_1\|^2 \\ \|x - a_2\|^2 \\ \|x - a_3\|^2 \end{pmatrix} \quad \text{w.r.t.} \leq_{\mathbb{R}^3_+}, \qquad
X = \{x \in \mathbb{R}^2 : x_1 + 2x_2 \leq 10,\ 0 \leq x_1 \leq 10,\ 0 \leq x_2 \leq 4\},
\end{equation*}
where $a_1 = (1,1)^\top$, $a_2 = (2,3)^\top$, $a_3 = (4,2)^\top$.
This is a tri-objective problem ($q=3$) with a two-dimensional decision space ($n=2$) and a polyhedral constraint set. The objectives measure squared distances to three reference points, producing a curved Pareto front.

\paragraph{Example 3 (Jahn 11.4).}
From \cite{jahn_vector_2011}, the problem
\begin{equation*}
\min_{x \in X} \begin{pmatrix} -x_1 \\ x_1 + x_2^2 \end{pmatrix} \quad \text{w.r.t.} \leq_{\mathbb{R}^2_+}, \qquad
X = \{x : x_1^2 - x_2 \leq 0,\ x_1 + 2x_2 \leq 3\}.
\end{equation*}
This has a parabolic Pareto front with different curvature properties than Example~1.

\subsection{Experimental Setup}

All experiments were implemented in MATLAB R2025b using CVX~\cite{cvx,gb08} for convex optimization and BENSOLVE Tools 1.3~\cite{bensolve} for polyhedral computations. \rev{For the reported full-enumeration runs ($q\le 3$), the vertices of each outer approximation are computed by a deterministic polar-duality/convex-hull backend (MATLAB's \texttt{convhulln}, i.e.\ the Quickhull algorithm~\cite{Barber1996}), guarded by a fail-closed check that rejects any returned vertex violating the current halfspace representation; we adopted this backend after the default BENSOLVE routine stalled on one instance and, on another, returned a vertex that the feasibility check flagged as infeasible. We additionally cross-checked the enumerated vertex set against the halfspace representation at termination over a finite bank of fixed support directions ($8$ for $q=2$, $26$ for $q=3$); this is a terminal consistency check on the representative runs, not a per-iteration or general-completeness certificate. The periodic full-enumeration passes of the $q=4$ hybrid strategy use the guarded default BENSOLVE backend.} All main comparison runs use regularization parameter $\varepsilon_0 = 0.1$ (the sensitivity study of Section~\ref{sec:eps0-sensitivity} varies it). This value balances adaptation against conditioning. \rev{By Lemma~\ref{lem:spec_bounds}, the metric quality satisfies $\theta_k\ge\sqrt{\varepsilon_0/(\varepsilon_0+1)}$ (improving to $\sqrt{(\varepsilon_0+\eta)/(\varepsilon_0+1)}$ when the cut-normals disperse with floor $\eta$; see Remark~\ref{rem:dispersion})}, so a smaller $\varepsilon_0$ lets $M_k$ track the empirical second-moment matrix $\Sigma_k$ more closely (stronger adaptation, but poorer conditioning), whereas a larger $\varepsilon_0$ pulls $M_k$ toward the Euclidean metric (better conditioned but less adaptive); the choice $\varepsilon_0=0.1$ kept \rev{the terminal }$\theta_k$ in \rev{$[0.46,0.67]$} \rev{for the four main $q\le 3$ runs, with $\theta_k$ never below its floor $\sqrt{\varepsilon_0/(\varepsilon_0+1)}\approx 0.30$ (attained at $k=1$, where $\Sigma_1$ is rank one)} (Table~\ref{tab:metric-quality}). \rev{The cut is anchored at the certified boundary point $y_k = v_k + z_k$ (Lemma~\ref{lem:cut_removes_vertex}); to avoid cancellation as $z_k\to 0$ we form its offset in the algebraically equivalent stable form $g_{k+1}^\top y_k = g_{k+1}^\top v_k + \|z_k\|_{M_k}$ rather than by first summing $v_k+z_k$.} \rev{Convergence rates are estimated by ordinary least squares of the plotted log-error against $\log_{10}j$ over the full run, where $j=k+1$ is the one-based evaluation index (the number of approximations evaluated; since $j\sim k$ asymptotically, the theoretical exponent $2/(1-q)$ is unchanged). At evaluation $j$, the plotted quantities are $\delta_H^{j-1}$ for the fixed Euclidean norm and the certified Euclidean upper proxy $\widehat E_{j-1}=\max_{v\in \operatorname{ext}(A_{j-1})}\|z^v\|_2\in[E_{j-1},\theta_{j-1}^{-1}E_{j-1}]$ (Corollary~\ref{cor:Ehat}) for the adaptive metric.}

Table~\ref{tab:parameters} summarizes the problem-specific parameters. For $q \leq 3$, we use full vertex enumeration; for $q = 4$, we employ the hybrid and linear-program (LP) probe strategies described in Section~\ref{sec:vertex-strategies}.

\begin{table}[htbp]
\centering
\caption{Experimental parameters for each test problem.}
\label{tab:parameters}
\begin{tabular}{llccl}
\toprule
Example & $q$ & Tolerance $\varepsilon$ & Max evaluations & Strategy \\
\midrule
1 (Ball) & 2 & $10^{-5}$ & 500 & Full \\
1 (Ball) & 3 & $0.01$    & 500 & Full \\
1 (Ball) & 4 & $0.0496$  & 500 & Hybrid / LP$^\dagger$ \\
2        & 3 & $0.02$    & 500 & Full \\
3 (Jahn) & 2 & $10^{-3}$ & 500 & Full \\
\bottomrule
\multicolumn{5}{l}{\footnotesize $^\dagger$See Section~\ref{sec:vertex-strategies} for strategy descriptions.}
\end{tabular}
\end{table}

\subsection[Convergence Rates]{Convergence Rates}\label{sec:convergence-results}

\rev{Figure~\ref{fig:convergence} shows, on a log-log scale for all four test configurations against the evaluation index $j=k+1$, the exact Euclidean Hausdorff error $\delta_H^{j-1} := \delta_H(A_{j-1}, A; \|\cdot\|_2)$ for the fixed Euclidean norm and the certified Euclidean upper proxy $\widehat E_{j-1}$ for the adaptive metric.} Table~\ref{tab:convergence} reports the \rev{evaluation counts and fitted slopes}.

\begin{figure}[htbp]
\centering
\includegraphics[width=\textwidth]{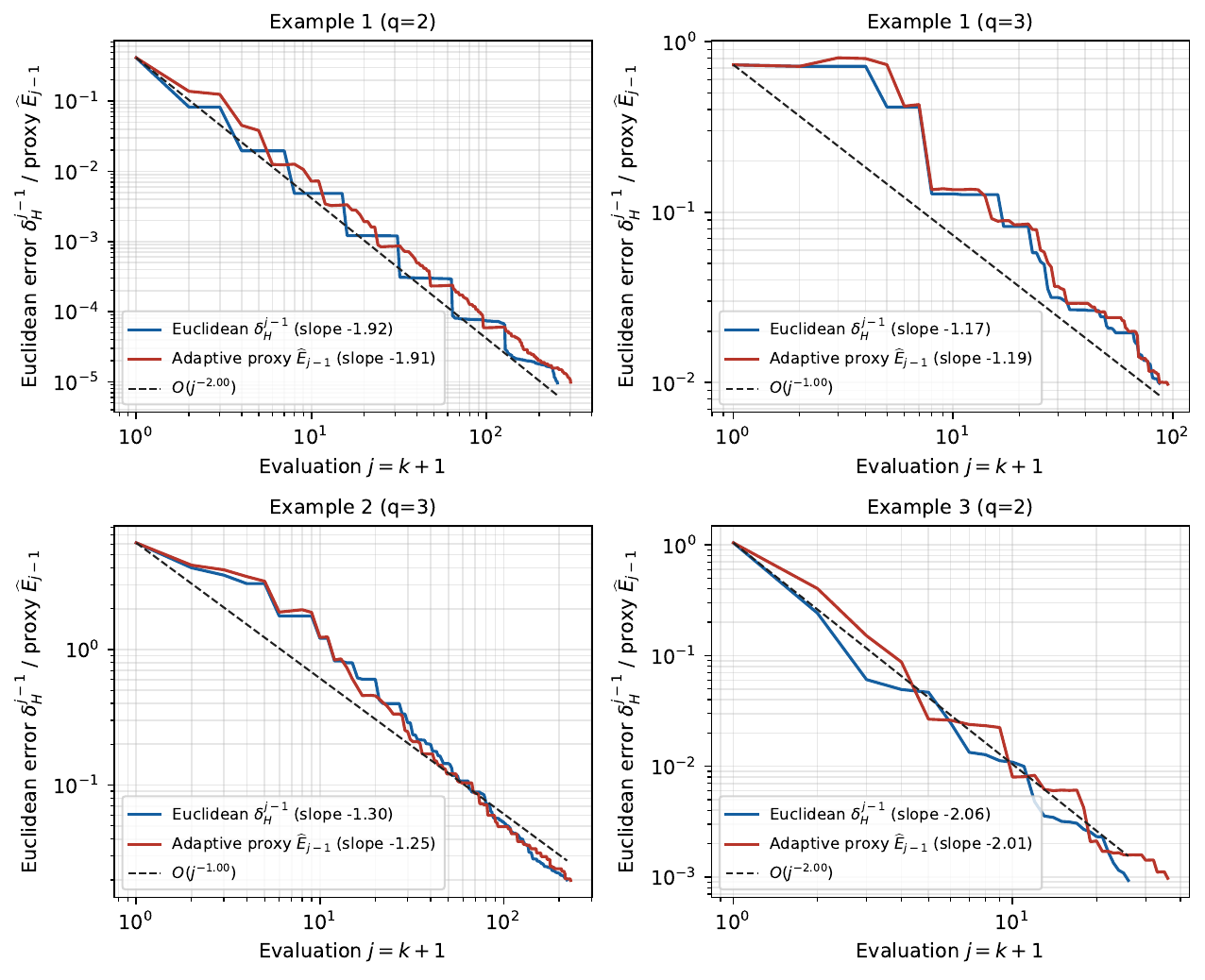}
\caption{Convergence of the Euclidean Hausdorff error $\delta_H^{j-1}$ (blue) and the adaptive certified Euclidean upper proxy $\widehat E_{j-1}$ (red), plotted against the evaluation index $j=k+1$. Dashed lines show the theoretical rate $O(j^{2/(1-q)})$ (Theorem~\ref{thm:inner-product-rate} for the fixed norm, Theorem~\ref{thm:rate} for the adaptive metric). Fitted slopes (shown in the legend) are ordinary-least-squares fits over the full run.}
\label{fig:convergence}
\end{figure}

\begin{table}[htbp]
\centering
\caption{Convergence of the Euclidean and adaptive metrics. The fitted slope is an ordinary-least-squares fit against $\log_{10}j$ ($j=k+1$, the one-based evaluation index) over the full run of the plotted log-error---$\log_{10}\delta_H^{j-1}$ for the Euclidean rows and $\log_{10}\widehat E_{j-1}$ for the adaptive rows, where $\widehat E_k\in[E_k,\theta_k^{-1}E_k]$ is the certified Euclidean upper proxy (Corollary~\ref{cor:Ehat}); the theoretical rate is $2/(1-q)$ (Theorem~\ref{thm:inner-product-rate} for the fixed norm, Theorem~\ref{thm:rate} for the adaptive metric).}
\label{tab:convergence}
\begin{tabular}{llcccc}
\toprule
Example & $q$ & Norm & Evaluations & Fitted slope & Theory \\
\midrule
1 (Ball) & 2 & Euclidean & 254 & $-1.92$ & $-2.0$  \\
         &   & Adaptive  & 302 & $-1.91$ &         \\
\addlinespace
1 (Ball) & 3 & Euclidean & 87  & $-1.17$ & $-1.0$  \\
         &   & Adaptive  & 95  & $-1.19$ &         \\
\addlinespace
2        & 3 & Euclidean & 221 & $-1.30$ & $-1.0$  \\
         &   & Adaptive  & 231 & $-1.25$ &         \\
\addlinespace
3 (Jahn) & 2 & Euclidean & 26  & $-2.06$ & $-2.0$  \\
         &   & Adaptive  & 36  & $-2.01$ &         \\
\bottomrule
\end{tabular}
\end{table}

\begin{revblock}
The certified runs show no evaluation advantage for the adaptive metric: on every instance it terminates in a comparable---in fact slightly larger---number of evaluations than the fixed Euclidean norm (Table~\ref{tab:convergence}). Evaluation-count reductions observed with an earlier implementation traced to an invalid cut anchor and disappear under the certified anchoring described in the experimental setup. The benefit of the adaptive metric is accordingly not a reduced evaluation count; it is the guarantee of Theorem~\ref{thm:rate} that the improved rate is attained with a problem-adapted metric that remains uniformly well-conditioned throughout.

The fitted slopes are consistent with the theoretical rate. For the bi-objective instances the slopes are close to the predicted $-2$ (Example~1: $-1.92$ Euclidean, $-1.91$ adaptive; Example~3: $-2.06$ and $-2.01$), and for the tri-objective instances they are at or slightly steeper than the predicted $-1$ (Example~1: $-1.17$ and $-1.19$; Example~2: $-1.30$ and $-1.25$), the latter consistent with the theory providing an upper bound on the asymptotic error. Figure~\ref{fig:convergence} shows both error curves tracking the $O(j^{2/(1-q)})$ reference lines. For the adaptive metric this is the behavior established by Theorem~\ref{thm:rate}, which yields the same improved rate with a constant inflated by $\theta^{-1}=\sqrt{(\varepsilon_0+1)/\varepsilon_0}$ relative to the fixed-norm bound.
\end{revblock}

\begin{revblock}
Finally, we comment on monotonicity. Because the outer approximations are nested ($A_{k+1} \subseteq A_k$), the exact Euclidean Hausdorff error $\delta_H^k$ is nonincreasing in $k$, as the Euclidean curves in Figure~\ref{fig:convergence} confirm. Two plotted quantities can nevertheless rise transiently: the certified upper proxy $\widehat E_k$ in the adaptive runs, because the metric $M_k$ changes from one iteration to the next, and the candidate-set residuals in Figure~\ref{fig:q4-comparison}, because the candidate vertex set is incomplete between enumeration passes. This accounts for the small upward steps visible in those curves. For the full-enumeration runs convergence to zero is guaranteed (Theorem~\ref{thm:euclid_convergence_adaptive}); the $q=4$ candidate-set curves use the probe-based strategies of Section~\ref{sec:vertex-strategies}, for which we do not establish the fixed-fraction property and therefore claim no convergence or rate guarantee.
\end{revblock}

\subsection{Adaptive Metric Performance}\label{sec:metric-performance}

Figure~\ref{fig:metric-evolution} illustrates the evolution of the adaptive metric for Example~1 with $q=2$. The metric quality parameter $\theta_k = \sqrt{\lambda_{\min}(M_k)/\lambda_{\max}(M_k)}$ stabilizes near $0.5$ after an initial transient. The eigenvalues of the empirical second-moment matrix $\Sigma_k = (1/k)\sum_{i=1}^k u_i u_i^\top$ separate early and remain bounded: $\lambda_{\min}(\Sigma_k) \approx 0.17$ and $\lambda_{\max}(\Sigma_k) \approx 0.83$ at convergence, so $\lambda_{\min}(\Sigma_k) > 0$ for large $k$\rev{ (Remark~\ref{rem:dispersion})}; the minimum eigenvalue is small during the initial transient, when few cut-normals have yet accumulated.

\begin{figure}[htbp]
\centering
\includegraphics[width=\textwidth]{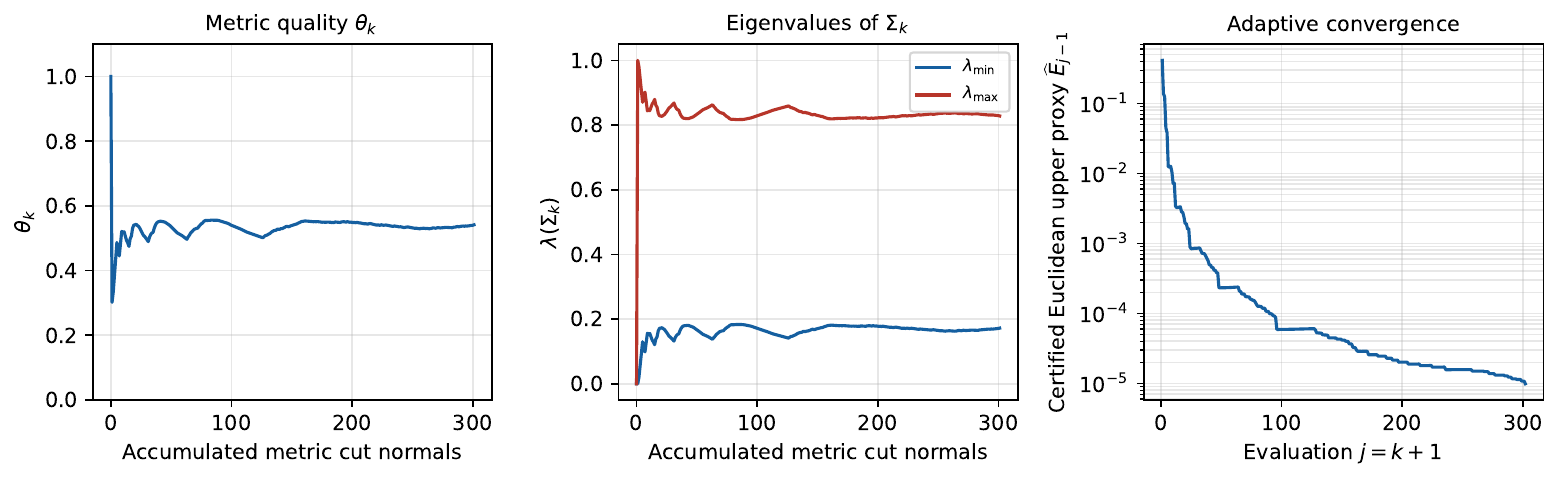}
\caption{Adaptive metric evolution for Example~1 ($q=2$). Left: metric quality $\theta_k$ stabilizes near $0.5$. Center: eigenvalues of $\Sigma_k$; $\lambda_{\min}(\Sigma_k)$ is small during the initial transient and settles at a positive value \rev{(for large $k$)}. Right: \rev{the certified Euclidean upper proxy $\widehat E_{j-1}$ (against evaluation $j=k+1$)} on a logarithmic scale.}
\label{fig:metric-evolution}
\end{figure}

\begin{table}[htbp]
\centering
\caption{Final metric quality parameter $\theta_k = \sqrt{\lambda_{\min}(M_k)/\lambda_{\max}(M_k)}$ at convergence.}
\label{tab:metric-quality}
\begin{tabular}{llcc}
\toprule
Example & $q$ & $\theta_k$ & Evaluations \\
\midrule
1 (Ball) & 2 & 0.54 & 302 \\
1 (Ball) & 3 & 0.65 & 95  \\
2        & 3 & 0.67 & 231 \\
3 (Jahn) & 2 & 0.46 & 36  \\
\bottomrule
\end{tabular}
\end{table}

Table~\ref{tab:metric-quality} reports the final metric quality $\theta_k$ in the four main $q\le3$ runs; the \rev{terminal }values $\theta_k \in [0.46, 0.67]$ \rev{(the four main $q\le 3$ runs) }indicate well-conditioned metrics\rev{, with $\theta_k$ never below its floor $\sqrt{\varepsilon_0/(\varepsilon_0+1)}\approx 0.30$}, consistent with the spectral bounds $\varepsilon_0 I \preceq M_k \preceq (\varepsilon_0 + 1)I$ from Lemma~\ref{lem:spec_bounds}. \rev{This empirical observation is consistent with Remark~\ref{rem:dispersion}: in these runs the cut-normals spread across the directions actually generated, keeping the terminal $\lambda_{\min}(\Sigma_k)$ positive---in the range $[0.107,0.245]$ across the four runs---and hence $\theta_k$ well above its worst-case floor $\sqrt{\varepsilon_0/(\varepsilon_0+1)}$. We do not attribute this spreading to front curvature; whether optimal-rate convergence \emph{forces} it remains open.}

\rev{The weakest conditioning occurs in Example~3 ($\theta_k = 0.46$), which is also the shortest run ($36$ evaluations, hence $35$ accumulated cut-normals at the terminal evaluation) and so gives the metric the least data to adapt to. We record this as an observation and do not claim that the parabolic front's lower curvature is its cause.}

\begin{revblock}
\subsection{Sensitivity to the Regularization Parameter}\label{sec:eps0-sensitivity}

To assess the role of the regularization $\varepsilon_0$, we sweep $\varepsilon_0\in\{0.03,0.1,0.3\}$ on the two full-enumeration $q=3$ instances, running each to a fixed budget of $50$ evaluations (medians over repeated timed runs; the reported quantities agreed across repeats to within tolerance). Table~\ref{tab:eps0-sensitivity} makes precise the two competing effects of Lemma~\ref{lem:spec_bounds}: a larger $\varepsilon_0$ raises the metric-quality floor $\theta_{\min}=\sqrt{\varepsilon_0/(\varepsilon_0+1)}$ (better conditioning), while a smaller $\varepsilon_0$ lets $M_k$ track the empirical second-moment matrix more closely (stronger adaptation). The recorded $\theta_{\min}$ equals this floor exactly ($0.171$, $0.302$, $0.480$ for $\varepsilon_0=0.03,0.1,0.3$). The default $\varepsilon_0=0.1$ attains the smallest error after the fixed budget on both instances, and the variation across the three tested values is modest. We read this as limited empirical support---from a two-instance, three-value, $50$-evaluation sweep---that the method is not overly sensitive to $\varepsilon_0$ near the default, rather than a general tuning-insensitivity result.

\begin{table}[htbp]
\centering
\caption{Sensitivity to the regularization $\varepsilon_0$ on the two $q=3$ full-enumeration instances, at a fixed budget of $50$ evaluations. $\widehat E_{49}$ is the certified Euclidean upper proxy at the terminal (50th) evaluation; $\theta_{\min}=\sqrt{\varepsilon_0/(\varepsilon_0+1)}$ is the metric-quality floor.}
\label{tab:eps0-sensitivity}
\begin{tabular}{llccc}
\toprule
Problem & $\varepsilon_0$ & Error $\widehat E_{49}$ & $\theta_{\text{final}}$ & $\theta_{\min}$ \\
\midrule
1 (Ball, $q=3$) & 0.03 & 0.0258 & 0.602 & 0.171 \\
                & 0.10 & \textbf{0.0241} & 0.711 & 0.302 \\
                & 0.30 & 0.0258 & 0.833 & 0.480 \\
\addlinespace
2 ($q=3$)       & 0.03 & 0.1358 & 0.681 & 0.171 \\
                & 0.10 & \textbf{0.1210} & 0.751 & 0.302 \\
                & 0.30 & 0.1225 & 0.839 & 0.480 \\
\bottomrule
\end{tabular}
\end{table}
\end{revblock}

\subsection{Vertex-Finding Strategies for High-Dimensional Problems}\label{sec:vertex-strategies}

The outer approximation algorithm requires enumerating all vertices of $A_k$ at each iteration to select the farthest vertex $v_k$. This vertex enumeration has complexity $O(m^{\lfloor q/2 \rfloor})$ where $m$ is the number of halfspace constraints, which grows prohibitively for $q \geq 4$. We develop two approximate strategies that bypass full enumeration.

The LP probe strategy finds candidate vertices by solving $\max\{w_i^\top y : By \leq b\}$---where $By\le b$ is the halfspace representation of the current outer approximation $A_k$---for a set of probe directions $\mathcal{W}$, initialized with $\{\pm e_j, \pm \bar{w}\}$ and augmented by each normalized combined cut-normal $u_{k+1}=g_{k+1}/\|g_{k+1}\|_2$. This avoids enumeration entirely but may miss vertices that are not maximizers in any probe direction. The hybrid strategy combines LP probes with periodic full enumeration: a full-enumeration pass occurs at the first evaluation ($j=1$) and whenever $j$ is a multiple of $N=50$, while the remaining evaluations use LP probes only. This trades efficiency against omissions: LP-probe evaluations are fast, while the periodic full enumerations can reveal previously missed vertices.

Convergence is preserved under mild conditions: whenever $E_k > 0$, at least one vertex $v \in \operatorname{ext}(A_k) \setminus A$ must be found, and the selected vertex must satisfy $d_2(v_k, A) \geq c \cdot E_k$ for some $c > 0$ (for full enumeration, \rev{$c = \theta$} as in the proof of Theorem~\ref{thm:rate}; for the probe-based strategies \rev{we do not establish such a positive fraction $c$}). \rev{The packing argument of Theorem~\ref{thm:rate} uses only this fixed-fraction property; for full enumeration it therefore applies verbatim and the improved rate $O(k^{2/(1-q)})$ is retained, with a constant depending on $c$. For the implemented LP-probe and hybrid strategies the fixed-fraction property is not verified, so no convergence or rate guarantee is claimed for them.}

Figure~\ref{fig:q4-comparison} compares all four combinations of norm and strategy on Example~1 with $q=4$.
\rev{Because full enumeration is bypassed, the certified error is not available at every evaluation; the reported quantity is the \emph{candidate-set residual}---the farthest distance to $A$ over the candidate vertices currently known---an uncertified surrogate that may omit vertices the probes have missed. All four combinations terminate in 108--112 evaluations at the candidate-residual tolerance $\varepsilon = 0.0496$ (Table~\ref{tab:strategies}); the choice of norm and of vertex-finding strategy makes little difference to the evaluation count. The runs with periodic full enumeration show characteristic upward steps when previously undetected vertices are discovered.}
\begin{figure}[htbp]
\centering
\includegraphics[width=0.8\textwidth]{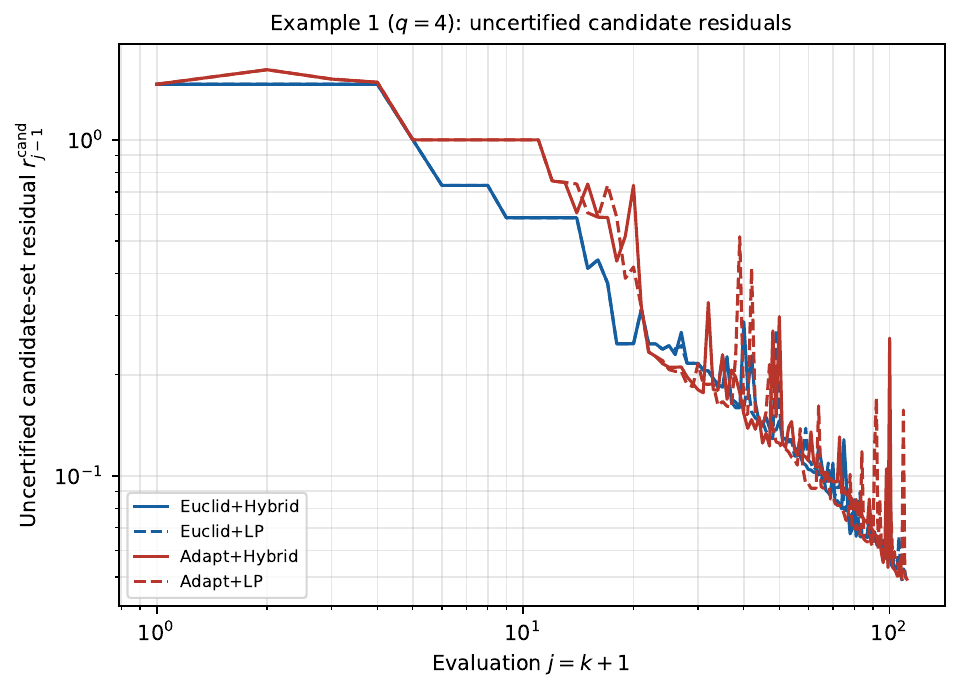}
\caption{Vertex-finding strategy comparison for Example~1 with $q=4$. All four norm--strategy combinations terminate in 108--112 evaluations at the candidate-residual tolerance $\varepsilon = 0.0496$; the plotted quantity is the uncertified candidate-set residual, against evaluation $j=k+1$.}
\label{fig:q4-comparison}
\end{figure}

\begin{table}[htbp]
\centering
\caption{Strategy comparison for Example~1 ($q=4$, $\varepsilon = 0.0496$). The fitted slope is for the \emph{uncertified candidate-set residual} (Section~\ref{sec:vertex-strategies}), not a certified Hausdorff error; $2/(1-q) = -0.67$ is shown only as a conditional reference slope, since the probe-based strategies are not proved to preserve the rate.}
\label{tab:strategies}
\begin{tabular}{lcccc}
\toprule
Configuration & Norm & Strategy & Evaluations & Fitted slope \\
\midrule
Euclid+Hybrid & Euclidean & Hybrid   & 108 & $-0.89$ \\
Euclid+LP     & Euclidean & LP probe & 111 & $-0.90$ \\
Adapt+Hybrid  & Adaptive  & Hybrid   & 108 & $-0.95$ \\
Adapt+LP      & Adaptive  & LP probe & 112 & $-0.96$ \\
\bottomrule
\end{tabular}
\end{table}

\subsection{Practical Recommendations}

Based on the experiments, we offer the following comments. \rev{The adaptive metric should not be chosen for speed: on these instances it does not reduce the evaluation count relative to the fixed Euclidean norm (Table~\ref{tab:convergence}), and our exploratory single-run timings showed no wall-clock advantage. It is the appropriate choice when the improved-rate guarantee of Theorem~\ref{thm:rate} under a problem-adapted scalarization metric is wanted---that is, when a geometry-aware scalarization is desired while retaining the fixed-Euclidean convergence guarantee; the $\varepsilon_0$-regularization then keeps the metric uniformly well-conditioned by construction.}

For vertex-finding, full enumeration is tractable and should be preferred when $q \leq 3$. For $q \geq 4$, where full enumeration becomes expensive, the hybrid and LP-probe strategies are practical heuristics rather than guaranteed methods; on our single $q=4$ instance the hybrid strategy with periodic enumeration (a full pass at the first evaluation and every $50$th evaluation thereafter) traded speed against vertex detection reasonably, while pure LP probing was faster but may miss vertices. We did not establish a convergence guarantee for either (Section~\ref{sec:vertex-strategies}), so these are empirical observations on one test.

The parameter $\theta_k$ serves as a useful real-time diagnostic of metric conditioning. \rev{Across the four main runs the terminal $\theta_k$ lay in }\rev{$[0.46, 0.67]$}\rev{, and $\theta_k$ never fell below its floor $\sqrt{\varepsilon_0/(\varepsilon_0+1)}\approx 0.30$}. If $\theta_k$ \rev{approaches this floor}, a larger regularization parameter $\varepsilon_0$ \rev{raises it} at the cost of slower adaptation. Our default $\varepsilon_0 = 0.1$ performed well across all problems\rev{, and the sensitivity study of Section~\ref{sec:eps0-sensitivity} (Table~\ref{tab:eps0-sensitivity}) supports this choice empirically}.


\section{Conclusion}\label{sec:conclusion}

\begin{revblock}
We have established that the improved $O(k^{2/(1-q)})$ convergence rate for norm-minimization--based outer approximation extends from the Euclidean norm to all inner-product norms via a linear isometry and---our main result---that the \emph{adaptive-metric} algorithm attains the same improved rate. A Tikhonov-type $\varepsilon_0$-regularization keeps the iteration-dependent metric uniformly well-conditioned, and an adaptation of the Euclidean packing argument to the moving metric transfers the improved exponent to the adaptive setting, with no strict-convexity, smoothness, or dispersion hypothesis on the image set (under the standing assumptions of Theorem~\ref{thm:rate}). The conditioning of the metric enters only the multiplicative constant, and the regularization is what the proof uses: the packing argument requires a uniform bound on $\kappa(M_k)$, which the $\varepsilon_0$-regularization supplies; whether the improved exponent can persist without it we leave open. Numerical experiments illustrate behaviour consistent with the theoretical rate for both the fixed Euclidean and the adaptive metric---the fitted slopes are consistent with the predicted exponents, at comparable evaluation counts---so the value of the adaptive metric lies in its rate guarantee and uniform conditioning rather than in fewer evaluations. Conceptually, our results clarify the geometric nature of the improved exponent $2/(1-q)$, which is not tied to the standard Euclidean structure but to the Hilbertian geometry of the norm; the role of the specific metric is confined to the constant---the condition number $\kappa(M)$ and the circumradius of the transformed slice $\tilde A$ for a fixed norm, and the regularized conditioning $\theta^{-1}=\sqrt{(\varepsilon_0+1)/\varepsilon_0}$ for the adaptive variant. Finally, the empirical second-moment matrix of the cut-normals had a positive smallest eigenvalue at termination in the four main $q\le3$ runs; Remark~\ref{rem:dispersion} records this observation and its diagnostic role (we do not claim a uniform-in-$k$ floor), leaving open whether optimal-rate convergence \emph{forces} such spreading.
\end{revblock}

\subsection{Open Questions}

Several directions for further research remain open. \rev{Having established the improved rate for the adaptive metric (Theorem~\ref{thm:rate}), a natural next question is how to reduce its constant $\theta^{-1}\bar\lambda(A)$---for instance through data-driven choices of the regularization $\varepsilon_0$ or of the metric update rule---so as to bring the onset of the asymptotic regime forward in practice.}

More broadly, it would be natural to extend the analysis beyond inner-product norms. The uniform convexity of the $\ell_p$-norms for $1 < p < \infty$ suggests that exponents strictly larger than $1/(1-q)$ might be attainable, with constants depending on $p$ through the modulus of convexity.

Another direction concerns the \rev{empirical dispersion of the cut-normals (Remark~\ref{rem:dispersion}). Our} experiments \rev{record $\lambda_{\min}(\Sigma_k)$ in roughly $[0.11, 0.24]$ in the four main $q\le3$ runs---an empirical observation only}. A deeper analysis of the equilibrium distribution of cut-normals could yield explicit formulas in terms of problem geometry.

Finally, \rev{the reliance on exact scalarization solves and full vertex enumeration limits the scope of the current theory. Establishing} precise convergence rates for the approximate vertex-finding strategies of Section~\ref{sec:vertex-strategies}\rev{, and for inexact scalarizations,} would broaden the practical applicability of the framework.

\section*{Funding}
This research received no external funding.

\section*{Data and Code Availability}
No external data were used in this study. All numerical results were generated by the algorithms described in the paper. The code used to produce the numerical experiments is available from the author upon reasonable request.

\section*{Acknowledgements}

The author acknowledges the support of King Fahd University of Petroleum \& Minerals (KFUPM) and the Interdisciplinary Research Center for Smart Mobility and Logistics at KFUPM. 
\bibliographystyle{spmpsci}
\bibliography{references}

@article{bensolve,
	author = {L{\"o}hne, Andreas and Wei{\ss}ing, Benjamin},
	title = {The {Vector} {Linear} {Program} {Solver} {Bensolve} -- {Notes} on {Theoretical} {Background}},
	journal = {European Journal of Operational Research},
	volume = {260},
	number = {3},
	pages = {807--813},
	year = {2017},
	doi = {10.1016/j.ejor.2016.02.039},
}

@article{desantis2020solving,
	author = {De Santis, Maria and Eichfelder, Gabriele and Niebling, Jan and Rockt{\"a}schel, Stefanie},
	title = {Solving {Multiobjective} {Mixed} {Integer} {Convex} {Optimization} {Problems}},
	journal = {SIAM Journal on Optimization},
	volume = {30},
	number = {4},
	pages = {3122--3145},
	year = {2020},
	doi = {10.1137/19M1264709},
}

@article{klamroth2007constrained,
	author = {Klamroth, Kathrin and Tind, J{\"o}rgen},
	title = {Constrained {Optimization} {Using} {Multiple} {Objective} {Programming}},
	journal = {Journal of Global Optimization},
	volume = {37},
	number = {3},
	pages = {325--355},
	year = {2007},
	doi = {10.1007/s10898-006-9052-x},
}

@article{klamroth2003unbiased,
	author = {Klamroth, Kathrin and Tind, J{\"o}rgen and Wiecek, Margaret M.},
	title = {Unbiased {Approximation} in {Multicriteria} {Optimization}},
	journal = {Mathematical Methods of Operations Research},
	volume = {56},
	number = {3},
	pages = {413--437},
	year = {2003},
	doi = {10.1007/s001860200217},
}

@article{niebling2019branch,
	author = {Niebling, Jan and Eichfelder, Gabriele},
	title = {A {Branch}-and-{Bound}-{Based} {Algorithm} for {Nonconvex} {Multiobjective} {Optimization}},
	journal = {SIAM Journal on Optimization},
	volume = {29},
	number = {1},
	pages = {794--821},
	year = {2019},
	doi = {10.1137/18M1169680},
}

@article{wagner2023algorithms,
	author = {Wagner, Alexander and Ulus, Firdevs and Rudloff, Birgit and Kov{\'a}{\v{c}}ov{\'a}, Gabriela and Hey, Norman},
	title = {Algorithms to {Solve} {Unbounded} {Convex} {Vector} {Optimization} {Problems}},
	journal = {SIAM Journal on Optimization},
	volume = {33},
	number = {4},
	pages = {2598--2624},
	year = {2023},
	doi = {10.1137/22M1507693},
}

@book{lotov2004interactive,
	author = {Lotov, Aleksandr V. and Bushenkov, Vladimir A. and Kamenev, Georgy K.},
	title = {Interactive {Decision} {Maps}: {Approximation} and {Visualization} of {Pareto} {Frontier}},
	series = {Applied Optimization},
	volume = {89},
	address = {Boston, MA},
	publisher = {Springer},
	year = {2004},
	doi = {10.1007/978-1-4419-8851-5},
}

@article{keskin_outer_2023,
	author = {Keskin, {\.I}rem Nur and Ulus, Firdevs},
	title = {Outer {Approximation} {Algorithms} for {Convex} {Vector} {Optimization} {Problems}},
	journal = {Optimization Methods and Software},
	volume = {38},
	number = {4},
	pages = {723--755},
	year = {2023},
	doi = {10.1080/10556788.2023.2167994},
}

@book{ansari_vector_2018,
	author = {Ansari, Qamrul Hasan and K{\"o}bis, Elisabeth and Yao, Jen-Chih},
	title = {Vector {Variational} {Inequalities} and {Vector} {Optimization}},
	series = {Vector Optimization},
	address = {Cham},
	publisher = {Springer International Publishing},
	year = {2018},
	doi = {10.1007/978-3-319-63049-6},
}

@article{eichfelder_local_2026,
	author = {Eichfelder, Gabriele and Ulus, Firdevs},
	title = {Local {Upper} {Bounds} {Based} on {Polyhedral} {Ordering} {Cones}},
	journal = {EURO Journal on Computational Optimization},
	volume = {14},
	pages = {100124},
	year = {2026},
	doi = {10.1016/j.ejco.2025.100124},
}

@article{eichfelder_approximation_2022,
	author = {Eichfelder, Gabriele and Warnow, Leo},
	title = {An {Approximation} {Algorithm} for {Multi}-{Objective} {Optimization} {Problems} {Using} a {Box}-{Coverage}},
	journal = {Journal of Global Optimization},
	volume = {83},
	number = {2},
	pages = {329--357},
	year = {2022},
	doi = {10.1007/s10898-021-01109-9},
}

@misc{cvx,
	author = {Grant, Michael and Boyd, Stephen},
	title = {{CVX}: {MATLAB} {Software} for {Disciplined} {Convex} {Programming}, {Version} 2.2},
	howpublished = {\url{http://cvxr.com/cvx}},
	year = {2020},
}

@article{nobakhtian2017benson,
	author = {Nobakhtian, Saeed and Shafiei, Negin},
	title = {A {Benson} {Type} {Algorithm} for {Nonconvex} {Multiobjective} {Programming} {Problems}},
	journal = {TOP},
	volume = {25},
	number = {2},
	pages = {271--287},
	year = {2017},
	doi = {10.1007/s11750-016-0430-3},
}

@article{lohne_primal_2014,
	author = {L{\"o}hne, Andreas and Rudloff, Birgit and Ulus, Firdevs},
	title = {Primal and {Dual} {Approximation} {Algorithms} for {Convex} {Vector} {Optimization} {Problems}},
	journal = {Journal of Global Optimization},
	volume = {60},
	number = {4},
	pages = {713--736},
	year = {2014},
	doi = {10.1007/s10898-013-0136-0},
}

@article{ehrgott_approximation_2011,
	author = {Ehrgott, Matthias and Shao, Lizhen and Sch{\"o}bel, Anita},
	title = {An {Approximation} {Algorithm} for {Convex} {Multi}-{Objective} {Programming} {Problems}},
	journal = {Journal of Global Optimization},
	volume = {50},
	number = {3},
	pages = {397--416},
	year = {2011},
	doi = {10.1007/s10898-010-9588-7},
}

@incollection{gb08,
	author = {Grant, Michael and Boyd, Stephen},
	title = {Graph {Implementations} for {Nonsmooth} {Convex} {Programs}},
	booktitle = {Recent {Advances} in {Learning} and {Control}},
	editor = {Blondel, Vincent D. and Boyd, Stephen P. and Kimura, Hidenori},
	series = {Lecture Notes in Control and Information Sciences},
	publisher = {Springer},
	address = {London},
	pages = {95--110},
	year = {2008},
	doi = {10.1007/978-1-84800-155-8_7},
}

@article{kamenev2002conjugate,
	author = {Kamenev, G. K.},
	title = {Conjugate {Adaptive} {Algorithms} for {Polyhedral} {Approximation} of {Convex} {Bodies}},
	journal = {Computational Mathematics and Mathematical Physics},
	volume = {42},
	number = {9},
	pages = {1301--1316},
	year = {2002},
}

@article{kamenev1992class,
	author = {Kamenev, G. K.},
	title = {A {Class} of {Adaptive} {Algorithms} for {Approximating} {Convex} {Bodies} by {Polyhedra}},
	journal = {Computational Mathematics and Mathematical Physics},
	volume = {32},
	number = {1},
	pages = {114--127},
	year = {1992},
}

@article{gruber_approximation_1982,
	author = {Gruber, Peter M. and Kenderov, Petar},
	title = {Approximation of {Convex} {Bodies} by {Polytopes}},
	journal = {Rendiconti del Circolo Matematico di Palermo},
	volume = {31},
	number = {2},
	pages = {195--225},
	year = {1982},
	doi = {10.1007/BF02844354},
}

@book{lohne_vector_2011,
	author = {L{\"o}hne, Andreas},
	title = {Vector {Optimization} with {Infimum} and {Supremum}},
	series = {Vector Optimization},
	address = {Berlin, Heidelberg},
	publisher = {Springer},
	year = {2011},
	doi = {10.1007/978-3-642-18351-5},
}

@article{benson_outer_1998,
	author = {Benson, Harold P.},
	title = {An {Outer} {Approximation} {Algorithm} for {Generating} {All} {Efficient} {Extreme} {Points} in the {Outcome} {Set} of a {Multiple} {Objective} {Linear} {Programming} {Problem}},
	journal = {Journal of Global Optimization},
	volume = {13},
	number = {1},
	pages = {1--24},
	year = {1998},
	doi = {10.1023/A:1008215702611},
}

@book{jahn_vector_2011,
	author = {Jahn, Johannes},
	title = {Vector {Optimization}: {Theory}, {Applications}, and {Extensions}},
	address = {Berlin, Heidelberg},
	publisher = {Springer},
	year = {2011},
	doi = {10.1007/978-3-642-17005-8},
}

@article{ararat_norm_2022,
	author = {Ararat, {\c{C}}a{\u{g}}{\i}n and Ulus, Firdevs and Umer, Muhammad},
	title = {A {Norm} {Minimization}-{Based} {Convex} {Vector} {Optimization} {Algorithm}},
	journal = {Journal of Optimization Theory and Applications},
	volume = {194},
	number = {2},
	pages = {681--712},
	year = {2022},
	doi = {10.1007/s10957-022-02045-8},
}

@article{ararat_convergence_2024,
	author = {Ararat, {\c{C}}a{\u{g}}{\i}n and Ulus, Firdevs and Umer, Muhammad},
	title = {Convergence {Analysis} of a {Norm} {Minimization}-{Based} {Convex} {Vector} {Optimization} {Algorithm}},
	journal = {SIAM Journal on Optimization},
	volume = {34},
	number = {3},
	pages = {2700--2728},
	year = {2024},
	doi = {10.1137/23M1574580},
}

@article{Barber1996,
	title = {The quickhull algorithm for convex hulls},
	volume = {22},
	issn = {0098-3500},
	url = {https://dl.acm.org/doi/10.1145/235815.235821},
	doi = {10.1145/235815.235821},
	number = {4},
	urldate = {2026-07-13},
	journal = {ACM Transactions on Mathematical Software (TOMS)},
	author = {Barber, C. Bradford and Dobkin, David P. and Huhdanpaa, Hannu},
	month = dec,
	year = {1996},
	pages = {469--483},
}

\end{document}